\documentclass[11pt]{amsart}

\usepackage{amsmath, amsbsy, amscd, amssymb, amsthm, ascmac, bm, enumerate, subcaption}
\usepackage{url}
\usepackage{hyperref,graphicx, xcolor}

\usepackage{etoolbox}
\patchcmd{\section}{\scshape}{\bfseries}{}{}
\makeatletter
\renewcommand{\@secnumfont}{\bfseries}
\makeatother

\theoremstyle{plain}
 \newtheorem{theorem}{Theorem}[section]

 \newtheorem{fact}[theorem]{Fact}
 \newtheorem{lemma}[theorem]{Lemma}
 
\theoremstyle{definition}
 \newtheorem{definition}[theorem]{Definition}
 \newtheorem{remark}[theorem]{Remark}

\renewcommand{\Re}{\operatorname{Re}}

\newcommand{\inner}[2]{\left\langle{#1},{#2}\right\rangle}

\numberwithin{equation}{section}
\numberwithin{figure}{section}
\numberwithin{table}{section}

\newcommand{\N}{\mathbb{N}}
\newcommand{\R}{\mathbb{R}}

\newcommand{\C}{\mathbb{C}}
\newcommand{\re}{\operatorname{Re}}
\newcommand{\im}{\operatorname{Im}}
\newenvironment{nouppercase}{%
  \renewcommand{\uppercasenonmath}[1]{}}{}
\title[Singly periodic maximal graphs in $\mathbb{L}^3$]
      {\LARGE Singly periodic maximal graphs with isolated singularities 
       in Lorentz-Minkowski 3-space}
\author[P.~Connor]
       {\large Peter Connor}
\address[P.~Connor]{Department of Mathematical Sciences \\
         Indiana University South Bend \\
         1700 Mishawaka Ave \\
         South Bend, IN 46634 \\ 
         USA}
\email{\href{mailto:pconnor@iu.edu}{pconnor@iu.edu}}
\urladdr{\url{https://pconnor.pages.iu.edu/}}
\author[S.~Fujimori]
       {\large Shoichi Fujimori}
\address[S.~Fujimori]{Department of Mathematics \\
         Hiroshima University \\
         Kagamiyama 1-3-1 \\
         Higashihiroshima \\ 
         Hiroshima 739-8526 \\ 
         Japan}
\email{\href{mailto:fujimori@hiroshima-u.ac.jp}{fujimori@hiroshima-u.ac.jp}}
\urladdr{\url{https://home.hiroshima-u.ac.jp/fujimori/}}
\date{January 24, 2026}
\subjclass[2020]{Primary 53A10; Secondary 53A35, 53C50}
\keywords{maximal surface, entire graph, isolated singularity}
\thanks{
The second author was partially supported by JSPS Grant-in-Aid for Scientific Research (C) 25K06977.
}

\begin{document}

\begin{abstract}
Utilizing the Weierstrass representation for embedded doubly periodic minimal surfaces with parallel ends, we construct entire singly periodic graphs of spacelike maximal surfaces with isolated cone-like singularities 
in the Lorentz-Minkowski 3-space $\mathbb{L}^3$. 
\end{abstract}

\begin{nouppercase}
\maketitle
\section{Introduction}
A maximal surface in the Lorentz-Minkowski 3-space $\mathbb{L}^3$ is a spacelike surface with vanishing mean curvature. 
Like minimal surfaces in Euclidean 3-space $\mathbb{R}^3$, maximal surfaces in $\mathbb{L}^3$ have a Weierstrass-type representation. 

On the other hand, in contrast to the case of minimal surfaces in $\mathbb{R}^3$, it is known that the only complete maximal surfaces in $\mathbb{L}^3$ are spacelike planes \cite{Cal}, \cite{CY}. 
Therefore, it is natural to consider a class that allows for certain singularities in order to investigate the global properties of maximal surfaces. 
Maximal surfaces with singularities have interesting global properties different from those of minimal surfaces. 
For example, while Bernstein's theorem demonstrates that the only entire minimal graphs in $\mathbb{R}^3$ are planes, 
there are many entire maximal graphs with isolated singularities in $\mathbb{L}^3$. 
Maximal graphs with isolated singularities are investigated in \cite{FL}, \cite{K2}, \cite{LLS}, and so on. 
Among them, Fern\'andez, L\'opez, and  Souam \cite{FLS}, \cite{FLS2} consider the moduli space of maximal surfaces with isolated singularities. 
Although the dimension of the moduli space of these entire maximal graphs is investigated by \cite{FLS}, \cite{FLS2} and so on, only a few examples have been constructed previously. 

In this paper, we give a construction of entire singly periodic maximal graphs with isolated singularities by utilizing the construction of embedded doubly periodic minimal surfaces with parallel ends.
Our construction allows for making examples with any number of isolated singularities. 
Hence these maximal graphs are much more complicated than previously constructed examples. 
While not all doubly periodic minimal surfaces correspond to an entire singly periodic maximal graph, all of the examples constructed in \cite{CW} do that nicely because the doubly periodic minimal surfaces constructed in \cite{CW} look like horizontal planes connected by catenoidal necks. 
Each neck on one of these minimal surfaces corresponds to a cone-like singularity on the related maximal surface. 
For the relation between catenoid necks and cone-like singularities, see \cite{FRUYY2} for example. 
Thus we have an entire multi-graph connected by cone-like singularities as a corresponding doubly periodic maximal surface.  
Though the minimal surfaces are embedded, the corresponding maximal surfaces are not even immersed, because the waist of a catenoid neck in $\mathbb{R}^3$ corresponds to a single point in $\mathbb{L}^3$.  
Then after restricting our maximal surface appropriately, we obtain a singly periodic entire maximal graph with cone-like singularities. 
The process is aided by assuming two orthogonal symmetry planes for the doubly periodic minimal surfaces and one symmetry plane for the corresponding singly periodic maximal graphs. 
This allows for working with a Weierstrass representation that conveniently specifies the location of the necks in the case of the minimal surfaces and of the cone-like singularities in the case of the maximal surfaces.

The paper is organized as follows. 
In Section~\ref{sec:prelim}, we recall the basic properties of maximal surfaces with singularities based on 
\cite{UY}, \cite{FSUY}, and \cite{FRUYY2}. 
In Section~\ref{sec:2minperiodic}, we introduce the Weierstrass representation for embedded doubly periodic minimal surfaces with parallel ends given by \cite{CW}. 
Utilizing this Weierstrass representation, we give a construction of singly periodic entire maximal graphs in Section~\ref{sec:1periodic}, and give our main result, Theorem~\ref{thm:main}. 
Finally, we discuss the dimension of the moduli space and give examples of singly periodic entire maximal graphs in Section~\ref{sec:ex}. 

\section{Preliminaries}
\label{sec:prelim}

We denote by $\mathbb{L}^3$ the Lorentz-Minkowsiki 3-space with indefinite metric 
$\inner{~}{~}=dx_1^2+dx_2^2-dx_3^2$. 
Let $M$ be a Riemann surface. 
A conformal immersion $f:M\to\mathbb{L}^3$ is called 
a {\it spacelike surface} if the 
induced metric $ds^2=\inner{df}{df}$ is positive definite on $M$. 
A spacelike surface $f:M\to\mathbb{L}^3$ is called {\it maximal} if 
its mean curvature vanishes identically. 
In \cite{UY} a notion of {\it maxface} was introduced as a maximal surface with 
certain kind of singularities. 
More precisely, $f:M\to\mathbb{L}^3$ is called a {\it maxface} if there exists an 
open dense subset $W$ of $M$ such that the restriction $f|_W$ of $f$ to $W$ gives 
a conformal maximal immersion and $df(p)\ne 0$ for all $p\in M$. 

For maxfaces, a similar representation formula to the Weierstrass representation for 
minimal surfaces in $\mathbb{R}^3$ is known. 

\begin{theorem}[Weierstrass-type representation \cite{K, UY}]\label{th:w-type-rep}
Let $M$ be a Riemann surface and $f:M\to\mathbb{L}^3$ a maxface. 
Then there exist holomorphic 1-forms $\phi_1$, $\phi_2$, $\phi_3$ satisfying 
\begin{align}
\phi_1^2+\phi_2^2-\phi_3^2 &=0, \label{eq:confomal} \\
|\phi_1|^2+|\phi_2|^2+|\phi_3|^2 &>0, \label{eq:complete}
\end{align}
such that $f$ is represented by the following path integrals: 
\begin{equation}\label{eq:maxface}
f(z)=\Re\int_{z_0}^z \left(\phi_1,\,\phi_2,\,\phi_3\right), 
\end{equation}
where $z_0\in M$ is a fixed point. 
Conversely, let $\phi_1$, $\phi_2$, $\phi_3$ be holomorphic 1-forms on $M$ 
satisfying \eqref{eq:confomal} and \eqref{eq:complete}, then 
\eqref{eq:maxface} defines a maxface. 
\label{thm:maxface}
\end{theorem}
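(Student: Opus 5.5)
The plan is to pass between $f$ and its complex derivative, using the standard fact that a conformal spacelike immersion is maximal if and only if its coordinate functions are harmonic. Throughout, $f$ is a smooth map on $M$. I will use the local holomorphic forms $\phi_j:=2\,\partial f_j = (\partial_x f_j - i\,\partial_y f_j)\,dz$ for a local complex coordinate $z=x+iy$. Since each $f_j$ is real, $df_j=\partial f_j+\bar\partial f_j=\Re(2\partial f_j)=\Re\phi_j$, which already gives \eqref{eq:maxface} once the $\phi_j$ are shown to be holomorphic.

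\emph{Direct part.} On $W$ the map $f$ is a conformal spacelike immersion with metric $ds^2=\lambda^2|dz|^2$. For such maps one has $\Delta f = 2\lambda^2 \vec H$, where $\Delta=\partial_x^2+\partial_y^2$ and $\vec H$ is the mean curvature vector. This holds exactly as in the Euclidean case: conformality $\langle f_x,f_x\rangle=\langle f_y,f_y\rangle$, $\langle f_x,f_y\rangle=0$, differentiated, shows $\Delta f$ is normal. Hence maximality gives $\Delta f_j=0$ on $W$. Since $f$ is smooth and $W$ is dense, $\Delta f_j\equiv0$ on all of $M$. Therefore $\bar\partial\phi_j=\tfrac12\Delta f_j\,d\bar z\wedge dz=0$, so each $\phi_j$ is a globally defined holomorphic $1$-form. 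Conformality on $W$ reads $\langle f_z,f_z\rangle=0$, i.e.\ $\phi_1^2+\phi_2^2-\phi_3^2=0$ on $W$. This holomorphic quadratic differential vanishes on a dense set, hence everywhere, which is \eqref{eq:confomal}. Finally $df(p)\neq0$ means $\Re\phi_j(p)\neq0$ for some $j$, i.e.\ not all $\phi_j(p)$ vanish, which is \eqref{eq:complete}.

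\emph{Converse.} Given holomorphic $\phi_j$ with \eqref{eq:confomal} and \eqref{eq:complete}, I will understand, as is implicit in the statement, that the real periods vanish so that \eqref{eq:maxface} is well defined. Then $f$ is smooth with harmonic coordinates, and $df=\Re(\phi_1,\phi_2,\phi_3)$ is nowhere zero by \eqref{eq:complete}. Condition \eqref{eq:confomal} gives $\langle f_z,f_z\rangle=0$, so the induced metric is a multiple of $|dz|^2$:
\[
ds^2=\tfrac12\bigl(|\phi_1|^2+|\phi_2|^2-|\phi_3|^2\bigr).
\]
To see where this is positive, I introduce $g=\phi_3/(\phi_1-i\phi_2)$ and $\eta=(\phi_1-i\phi_2)/2$. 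These give $\phi_1=(1+g^2)\eta$, $\phi_2=i(1-g^2)\eta$, $\phi_3=2g\eta$, and a direct computation shows $|\phi_1|^2+|\phi_2|^2-|\phi_3|^2=2(1-|g|^2)^2|\eta|^2$. The degenerate case $\phi_1\equiv i\phi_2$ is handled by swapping the roles via $\phi_1+i\phi_2$. Let $W=\{|g|\neq1\}$ minus the isolated zeros of $\eta$. On $W$, $f$ is a conformal spacelike immersion with harmonic coordinates, hence maximal by the identity $\Delta f=2\lambda^2\vec H$.

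\emph{Main obstacle.} The delicate step is the density of $W$. The set $\{|g|=1\}$ is the level set of a meromorphic function, and it is nowhere dense unless $g$ is a constant of modulus one. In that exceptional case $ds^2\equiv0$ and the image lies in a null direction, so $f$ is not spacelike anywhere. I expect to have to exclude this case explicitly, as in \cite{UY}, by requiring that $ds^2$ not vanish identically. With that proviso, $W$ is open and dense, and $f$ is a maxface.
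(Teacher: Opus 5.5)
Your proof is correct and is the standard argument of the sources the paper cites for this result (Kobayashi; Umehara--Yamada); the paper itself does not reprove it. Harmonicity of $f$ on the dense set $W$ extends to $M$, so $\partial f$ is holomorphic, and in the converse the factorization through $(g,\eta)$ shows the singular set is $\{|g|=1\}$, which is nowhere dense unless $|g|\equiv 1$.

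Your proviso is genuinely necessary, not cosmetic, because as literally stated the converse fails. For instance $(\phi_1,\phi_2,\phi_3)=(dz,0,dz)$ on $\C$ satisfies \eqref{eq:confomal} and \eqref{eq:complete} but yields the null line $f=(x,0,x)$, which is not a maxface. This is why Umehara--Yamada's formulation assumes $|G|\not\equiv 1$, together with vanishing real periods.
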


The induced metric $ds^2$ of $f$ is given by 
\[
ds^2=\frac{1}{2}\left(|\phi_1|^2+|\phi_2|^2-|\phi_3|^2\right). 
\]
The singular set $S(f)$ of $f$ is
\[
S(f)=\{p\in M\;;\;|\phi_1(p)|^2+|\phi_2(p)|^2-|\phi_3(p)|^2=0\}.
\]
We set 
\begin{equation}
G=\frac{-\phi_3}{\phi_1-i\phi_2}, 
\end{equation}
where $i=\sqrt{-1}$.  
Then $G$ is a meromorphic function on $M$, 
and $S(f)$ is written as 
\[
S(f)=\{p\in M\;;\;|G(p)|=1\}.
\] 
See \cite[Theorem 2.6]{UY}.
Moreover, 
$G|_{M\setminus S(f)}:M\setminus S(f)
\to(\mathbb{C}\cup\{\infty\})\setminus \{|z|=1\}$ 
coincides with the composition of the Gauss map 
\[
\nu|_{M\setminus S(f)}:M\setminus S(f)
\to H^2=\{x\in\mathbb{L}^3\;;\;\inner{x}{x}=-1\}
\]
of the maximal surface and the stereographic projection 
\[
\sigma:H^2\ni (x_1,x_2,x_3)
\mapsto\frac{x_1+ix_2}{1-x_3}\in\mathbb{C}\cup\{\infty\},
\]
that is, 
$G|_{M\setminus S(f)}=\sigma\circ \nu|_{M\setminus S(f)}$.
So we call $G$ the Gauss map of the maxface. 
Using this meromorphic function $G$ and the holomorphic 1-form $dh=-\phi_3$, 
we have   
\[
\phi_1=\frac{1}{2}\left(\frac{1}{G}+G\right)dh, 
\qquad
\phi_2=\frac{i}{2}\left(\frac{1}{G}-G\right)dh,  
\qquad
\phi_3=-dh,
\]
and 
\[
ds^2=\left(\frac{1}{|G|}-|G|\right)^2\frac{|dh|^2}{4}. 
\]

The triple $(M,\,G,\,dh)$ is called the {\it Weierstrass data} of $f$. 

\begin{remark}
The second fundamental form 
${\rm I}\!{\rm I}$ of the surface \eqref{eq:maxface} is given by
\[
{\rm I}\!{\rm I}=Q+\overline{Q},\qquad
Q=\frac{dGdh}{G}.
\]
$Q$ is called the {\em Hopf differential} of $f$.  
\end{remark}

Generic singularities of maxfaces are classified in \cite{FSUY}. 
Moreover, several criteria for singular points of maxfaces by using their 
Weierstrass data are given in \cite{FRUYY2, FSUY, UY}. 

\begin{definition}[Cone-like singular points \cite{FRUYY2}]\label{df:cone}
Let $f:M\to\mathbb{L}^3$ be a maxface with Weierstrass data $(M,G,dh)$. 
We denote by $S(f)$ the singular set of $f$, that is, 
$S(f)=\{p\in M\;;\;|G(p)|=1\}$.
\begin{enumerate}
\item A singular point $p\in S(f)$ of $f$ is called {\it non-degenerate} if 
$dG$ does not vanish at $p$. 
\item Let $S_0\subset S(f)$ be a connected component of $S(f)$ which consists 
of non-degenerate singular points. 
Then each point of $S_0$ is called a {\em generalized cone-like singular point} 
if $S_0$ is compact and the image $f(S_0)$ is one point.  
Moreover, if there is a neighborhood $U$ of $S_0$ and $f(U\setminus S_0)$ 
is embedded, 
each point of $S_0$ is called a {\em cone-like singular point}.
\end{enumerate}
\end{definition}

\begin{fact}{\cite[Lemmas 2.3 and 2.4]{FRUYY2}}
Let $f:M\to\mathbb{L}^3$ be a maxface with Weierstrass data $(M,G,dh)$,  
and $S_0\subset S(f)$ a connected component of $S(f)$. 
\begin{enumerate}
\item $S_0 $ consists of generalized cone-like singular points 
if and only if it is compact, and
\[
\frac{dG}{Gdh}\in\mathbb{R}\setminus\{0\}
\]
holds. 
\item Assume that $S_0$ consists of generalized cone-like singular points. 
Then it consists of cone-like singular points if and only if 
$G|_{S_0}: S_0\to S^1\subset\mathbb{C}$ is injective 
and $dh/G$ does not vanish on $S_0$.
\end{enumerate}
\end{fact}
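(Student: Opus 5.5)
We prove both parts by working in a local coordinate adapted to the singular curve $S_0$. Near a non-degenerate singular point $p\in S_0$ we have $dG(p)\neq 0$, so $S_0=\{|G|=1\}$ is locally a smooth real-analytic curve. Write $G=e^{\zeta}$ locally with $\zeta=u+iv$. Since $dG\ne0$, $\zeta$ is a holomorphic coordinate, and $S_0$ is the line $\{u=0\}$. In this coordinate
\[
\frac{dG}{G\,dh}=\frac{d\zeta}{dh}.
\]

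\emph{Part (1).} If $S_0$ is compact, its components are closed curves, and $f(S_0)$ is a point exactly when $df$ restricted to the tangent direction of $S_0$ vanishes identically. The tangent vector to $S_0$ is $\partial_v$, i.e.\ $d\zeta(\partial_v)=i$. On $S_0$ we have $|G|=1$, so $1/G=\bar G$. Set $dh=\lambda\,d\zeta$. Then
\[
\phi_1(\partial_v)=\tfrac{i}{2}(\bar G+G)\lambda,\qquad
\phi_2(\partial_v)=-\tfrac12(\bar G-G)\lambda,\qquad
\phi_3(\partial_v)=-i\lambda .
\]
The derivative of $f$ along $S_0$ is the real part of this vector. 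Here $\bar G+G=2\cos v$ is real and $\bar G-G=-2i\sin v$ is purely imaginary. The three real parts therefore vanish for all $v$ if and only if $\Re(i\lambda)=0$, i.e.\ $\lambda\in\mathbb{R}$. For the first two components this uses that $\cos v$ and $\sin v$ do not vanish simultaneously; the third component gives $\lambda\in\mathbb R$ directly. Since $\lambda = dh/d\zeta = (dG/(G\,dh))^{-1}$, the condition is exactly $dG/(G\,dh)\in\mathbb{R}$ on $S_0$. It is nonzero because $dG\ne 0$ there and $dh$ has no poles. Conversely, this reality condition gives $f$ constant along each component of $S_0$, and compactness plus connectedness finish the argument. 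Here $dh$ cannot vanish at points of $S_0$: if it did, all $\phi_j$ would vanish there since $|G|=1$, contradicting \eqref{eq:complete}. So $\lambda$ is finite and nonzero.

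\emph{Part (2).} We need local embeddedness of $f$ on $U\setminus S_0$. First compute $f$ transversally. Along $\partial_u$ we have $\phi_j(\partial_u)=-i\,\phi_j(\partial_v)$. This gives
\[
\Re\phi(\partial_u)|_{S_0}=\lambda\,(\cos v,\ \sin v,\ -1),
\]
and this vector is nonzero exactly when $\lambda\ne0$, i.e.\ when $dh/G$ does not vanish.

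Expanding in $u$, $f(u,v)\approx f(p_0)+u\lambda(v)(\cos v,\sin v,-1)$. The image near $S_0$ is therefore a cone over the curve $v\mapsto(\cos v,\sin v)$, which is $G|_{S_0}$ read in the horizontal plane, with the sheets $u>0$ and $u<0$ lying above and below the vertex.

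Embeddedness of a punctured neighborhood then holds if and only if $v\mapsto G$ is injective on $S_0$ (no two generators coincide) and the first-order term does not degenerate ($\lambda\ne0$). If either condition fails, two generators coincide or the map collapses, producing a self-intersection or a non-immersion.

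The main obstacle is making this first-order cone picture rigorous. The plan is a blow-up argument: consider the map $(u,v)\mapsto (f-f(p_0))/u$, which extends smoothly to $u=0$ and is injective there. Then use compactness of $S_0$ to get injectivity for small $|u|$, treating the sign of $u$ (the two sheets, distinguished by the sign of $x_3$) separately.
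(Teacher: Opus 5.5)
The paper does not prove this Fact; it is quoted from \cite{FRUYY2}, so your argument has to stand on its own.

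Part~(1) does stand. In the coordinate $\zeta=\log G$, the derivative of $f$ along $S_0$ is $-\im(\lambda)(\cos v,\sin v,-1)$. So $f|_{S_0}$ is constant exactly when $\lambda=dh/d\zeta$ is real. Moreover $\lambda\neq 0,\infty$, because $dG\neq0$ and, by \eqref{eq:complete}, $dh$ cannot vanish where $|G|=1$. Two small points should be made explicit. In the converse direction, it is $dG/(G\,dh)\neq 0$ that gives non-degeneracy. And $\lambda$, being real and nonzero on the connected curve $S_0$, has constant sign; you use this tacitly when you place the two sheets on opposite sides of the vertex. Your remark about $dh$ also shows that the hypothesis ``$dh/G$ does not vanish on $S_0$'' in (2) is automatic for a maxface. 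So your ``map collapses'' alternative is vacuous, and the whole content of (2) is the equivalence with injectivity of $G|_{S_0}$.

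That equivalence is exactly the step you defer, and there the proposal has a genuine gap.

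For the ``if'' direction, the blow-up as you set it up does not give what you need. Write $P=f(S_0)$ and $F=(f-P)/u$. Injectivity of $F$ on $\{u=0\}$, or even on each level $\{u=\mathrm{const}\}$, does not imply injectivity of $f=P+uF$. A coincidence $f(u_1,v_1)=f(u_2,v_2)$ with $u_1\neq u_2$ only gives $u_1F(u_1,v_1)=u_2F(u_2,v_2)$, because $u$ is not determined by the image point. The repair is to normalize by a quantity read off from the image:
\begin{itemize}
\item[(a)] On each sheet, $t=|x_3-x_3(P)|=|\lambda|\,|u|+O(u^2)$ can replace $u$ as a coordinate.
\item[(b)] Equal images then force the same sheet and the same $t$.
\item[(c)] The horizontal part $H(t,v)\in\C$ of $(f-P)/t$ is smooth up to $t=0$, with $H(0,\cdot)=\pm G|_{S_0}$.
\item[(d)] Hence, for small $t$, the closed curve $v\mapsto H(t,v)$ has strictly increasing argument and winds $k=\deg(G|_{S_0})$ times around $0$.
\item[(e)] For $k=1$ this curve is injective, which gives embeddedness.
\end{itemize}

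For the ``only if'' direction, ``two generators coincide'' is not an argument: two sheets tangent to the same generator can be separated at second order. You must show the sheets actually cross. For example, write the level curve as $r(\theta)e^{i\theta}$, $0\le\theta\le2\pi k$, with $r$ being $2\pi k$-periodic. Then $g(\theta)=r(\theta+2\pi)-r(\theta)$ satisfies $\sum_{j=0}^{k-1}g(\theta+2\pi j)=0$, so $g$ vanishes somewhere. For $k\ge2$ this produces two distinct points of $M$ at the same height with the same image, and this happens in every neighborhood of $S_0$.
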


\begin{definition}\label{df:isolate}
An \textit{isolated singularity} of a graph in $\mathbb{L}^3$ is an isolated point of a graph in $\mathbb{L}^3$ where the tangent plane cannot be defined and there exists a punctured neighborhood of this point in the graph such that the tangent plane is defined at any point in the neighborhood. 
Note that a cone-like singularity is a point in the underlying Riemann surface of the maximal surface, and the image of a cone-like singularity is an isolated singularity. 
\end{definition}

\section{Doubly periodic minimal surfaces}
\label{sec:2minperiodic}

There are many examples of embedded doubly periodic minimal surfaces with parallel ends that can be used to construct a singly periodic maximal graph.  Most known examples of doubly periodic minimal surfaces with parallel ends limit as a foliation of parallel planes with catenoid necks between successive planes that shrink to nodes at the limit.  Examples of this type of genus one were constructed by Karcher \cite{Ka2}, of genus two by Wei \cite{Wei}, and of arbitrary genus by the first author and Weber \cite{C2}, \cite{CW}.  See figure \ref{fg:CW} for two examples of genus eight.  Traditionally, doubly periodic minimal surfaces are oriented so that the ends are vertical and have horizontal normal vectors, in which case the period vectors are also horizontal.  The simplest examples have four ends in the quotient surface - two top ends and two bottom ends.  As shown in \cite{MR}, the ends are asymptotic to vertical half-planes, or flat annuli in the quotient.  The ends are referred to as Scherk ends.

\begin{figure}[htbp] 
\begin{center}
\begin{tabular}{ccc}
 \includegraphics[width=.4\linewidth]{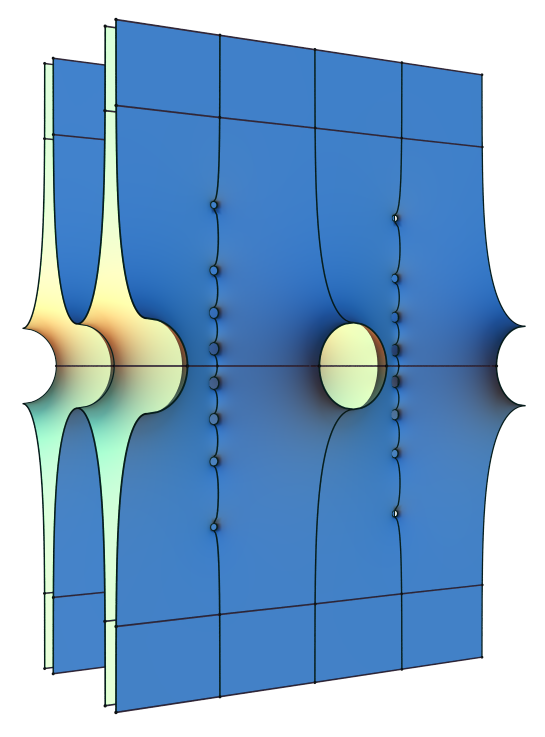} \hspace{1in}
 \includegraphics[width=.3\linewidth]{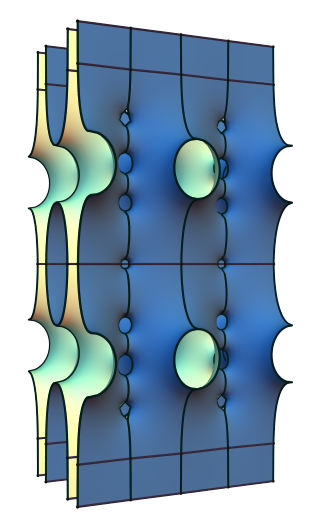} \\
\end{tabular}
\caption{Connor-Weber examples of genus eight doubly periodic minimal surfaces.}
\label{fg:CW}
\end{center}
\end{figure} 

Given a Riemann surface $M$, a meromorphic function $G$ and meromorphic one-form $dh$ on $M$, $(M,G,dh)$ is the Weierstrass data for a doubly periodic minimal surface with horizontal periods $T_1$ and $T_2$ and Scherk ends if the following hold:
\begin{enumerate}
\item
The zeros of $dh$ are the zeros and poles of $G$ on $M$ minus the ends, with the same multiplicity.
\item
$dh$ has a pole of order one and $G$ has finite value at each of the ends.
\item
For each closed curve $\gamma$ on $M$,
\[
\re\int_{\gamma}\left(\frac{1}{2}\left(\frac{1}{G}-G\right)dh,\frac{i}{2}\left(\frac{1}{G}+G\right)dh,dh\right)=(0,0,0)\mod\{T_1,T_2\}.
\]
This is called the {\it period problem}.
\end{enumerate}

Utilizing the observed geometry of the surfaces, one can come up with a possible Weierstrass data for a candidate doubly periodic minimal surface that behaves similarly to the Karcher, Wei, and Connor-Weber examples.  Assuming two orthogonal vertical symmetry planes, one can work with a fundamental piece consisting of one-fourth of the quotient surface.  See figure \ref{fg:CWfd}.  In this case, the fundamental piece has two ends.  Place the bottom end at $z=0$ and the top end at $z=\infty$, and orient the surface so the normal vectors at the ends of the fundamental piece point in the positive $x_1$ direction.  Then $G(0)=G(\infty)=1$.  Set the fundamental domain as $\{z\in\C^*\;;\;\im{z}\geq 0\}$, where $\C^*=\mathbb{C}\setminus\{0\}$.  The requirement that $dh$ has a pole of order one at the ends means we can set
\[
dh=\frac{1}{z}dz.
\]
Note that this means $dh$ has no zeros, and the first two requirements on the Weierstrass data are satisfied.  

\begin{figure}[htbp] 
\begin{center}
\begin{tabular}{ccc}
 \includegraphics[height=3in]{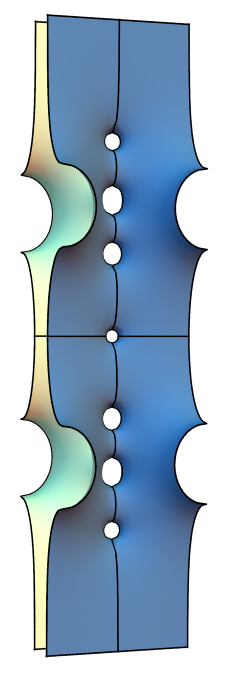} \hspace{1in}
 \includegraphics[height=3in]{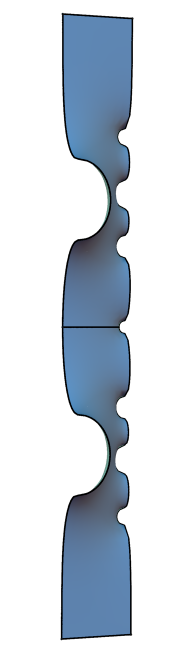} \\
\end{tabular}
\caption{Quotient (left) and fundamental piece (right) of a Connor-Weber genus eight doubly periodic minimal surface.}
\label{fg:CWfd}
\end{center}
\end{figure} 

The positive real axis will map to the right side of the fundamental piece, and the negative real axis will map to the left side of the fundamental piece.  
See the right-hand surface of Figure \ref{fg:CWfd}.
Assume the fundamental piece has $m$ handles opening in the negative $x_1$ direction (inward) and $n$ handles opening in the positive $x_1$ direction (outward).  
Place the $m$ handles that open inward on the right side of the fundamental piece, as in Figure \ref{fg:CWfd}.  
Ordering the inward pointing handles from bottom to top, for each $k$, let $a_{2k-1}$ and $a_{2k}$ map to the bottom and top, respectively, of the $k$-th handle opening inward.  
Place the $n$ handles that open outward on the left side of the fundamental piece, as in Figure \ref{fg:CWfd}.  Ordering these outward handles from bottom to top, for each $k$, let $b_{2k-1}$ and $b_{2k}$ map to the bottom and top, respectively, of the $k$-th handle opening outward.  So 
\[
b_{2n}<b_{2n-1}<\cdots<b_2<b_1<0<a_1<a_2<\cdots<a_{2m-1}<a_{2m}.
\]

Since the surface normal points in the positive $x_1$ direction at the ends of the fundamental piece, the surface normal points upward at the bottom and downward at the top of each handle opening inward, and the surface normal points upward at the bottom and downward at the top of each handle opening outward.  That is, $G(a_{2k-1})=\infty$ and $G(a_{2k})=0$ for $k=1,2,\ldots,m$, and $G(b_{2k-1})=0$ and $G(b_{2k})=\infty$ for $k=1,2,\ldots,n$.  
We can now define the map $G$.  Let 
\begin{equation}\label{eq:barM}
\overline{M}=\left\{(z,w)\in({\C}\cup\{\infty\})^2\;;\;w^2=\prod_{k=1}^m\frac{z-a_{2k}}{z-a_{2k-1}}\prod_{k=1}^n\frac{z-b_{2k-1}}{z-b_{2k}}\right\},
\end{equation}
\[
M=\overline{M}\setminus\{(0,\pm1),(\infty,\pm1)\},
\]
\[
G(z)=w,
\qquad
\text{and}
\qquad
dh=\frac{1}{z}dz.
\]
Assuming the period problem is solvable for an appropriate set of values of the $a_k$ and $b_k$ terms, we have the Weierstrass data $(M,G,dh)$ of a genus $m+n-1$ doubly periodic minimal surface with two top and top bottom ends in the quotient.  In order for $G(0)=1$, we need to set 
\[
b_{2n}=\prod_{k=1}^m\frac{a_{2k}}{a_{2k-1}}\prod_{k=1}^{n-1}\frac{b_{2k-1}}{b_{2k}}b_{2n-1}.
\]
Since we need $b_{2n}<b_{2n-1}$, this does place restrictions on the possible values of the $a_k$ and $b_k$ terms.

There are examples in which the handles on the right side of the fundamental piece do not all open inward (or outward on the left side).  If the handle between the images of $a_{2k-1}$ and $a_{2k}$ opens outward instead of inward then the normal at the bottom of the handle will point down and the normal at the top of the handle will point up.  That is, $G(a_{2k-1})=0$ and $G(a_{2k})=\infty$.  Similarly, if the handle between the images of $b_{2k-1}$ and $b_{2k}$ opens inward instead of outward then the normal at the bottom of the handle will point up and the normal at the top of the handle will point down.  That is, $G(b_{2k-1})=\infty$ and $G(b_{2k})=0$.  Taking into account this possibility, set

\[
w^2=\prod_{k=1}^m\left(\frac{z-a_{2k}}{z-a_{2k-1}}\right)^{\alpha_k}\prod_{k=1}^n\left(\frac{z-b_{2k-1}}{z-b_{2k}}\right)^{\beta_k}
\]
with $\alpha_k,\beta_k\in\{\pm 1\}$ for each $k\in\N$.

\section{Singly periodic maximal graphs}
\label{sec:1periodic}


Given the Weierstrass data $(M,G,dh)$ and representation
\[
(\omega_1,\omega_2,\omega_3)=\left(\frac{1}{2}\left(\frac{1}{w}-w\right)\frac{1}{z}dz,\frac{i}{2}\left(\frac{1}{w}+w\right)\frac{1}{z}dz,\frac{1}{z}dz\right)
\]
for a doubly periodic minimal surface developed in Section \ref{sec:2minperiodic}, we consider the corresponding maximal surfaces.  Since a maximal surface cannot have a horizontal normal and thus cannot have any vertical Scherk ends, we rotate the surface so that the ends are horizontal instead of vertical.  This results in the Weierstrass representation

\[
(\omega_1,\omega_2,\omega_3)=\left(\frac{i}{2}\left(\frac{1}{w}+w\right)\frac{1}{z}dz,\frac{1}{z}dz,\frac{1}{2}\left(\frac{1}{w}-w\right)\frac{1}{z}dz\right),
\]
with Weierstrass data
\[
G=\frac{i(w+1)}{w-1},\qquad dh=\frac{1}{2}\left(\frac{1}{w}-w\right)\frac{1}{z}dz
\]
for a doubly periodic minimal surface with four horizontal Scherk ends.  
Setting $(\phi_1,\phi_2,\phi_3)=(i\omega_1,i\omega_2,\omega_3)$, we get the Weierstrass representation 
\[
(\phi_1,\phi_2,\phi_3)=\left(-\frac{1}{2}\left(\frac{1}{w}+w\right)\frac{1}{z}dz, \frac{i}{z}dz, \frac{1}{2}\left(\frac{1}{w}-w\right)\frac{1}{z}dz\right)
\]
for a maximal surface, with Weierstrass data
\[
G=\frac{1+w}{1-w},\qquad
dh=-\frac{1}{2}\left(\frac{1}{w}-w\right)\frac{1}{z}dz.
\]
Note that $G(\infty)=\infty$, so the Scherk end at $z=\infty$ is horizontal.  The end at $z=0$ is not necessarily horizontal because
\[
G(0)=\frac{1+w(0)}{1-w(0)},
\]
so the end at $z=0$ is only horizontal if $w(0)=1$.  We are constructing a singly periodic surface, with the period of $(0,2\pi,0)$ given by traveling along any closed curve around either end.  Therefore, we do not need the ends to be horizontal.  However, after rotating if necessary, we can assume that one of the ends is horizontal.  Fix the end at $z=\infty$ to be horizontal, as provided by the given Weierstrass representation.

The following lemma is easy to prove and provides information about surface symmetries.
\begin{lemma}\label{lem:auto}
The Riemann surface $\overline{M}$ as in \eqref{eq:barM} has the automorphisms 
\[
\tau_1(z,w)=(\overline{z},\overline{w})
\qquad\text{and}\qquad
\tau_2(z,w)=(\overline{z},-\overline{w}).
\]
These transformations induce the following changes in the Weierstrass representation:
\[
\tau_1^*(\phi_1,\phi_2,\phi_3)=(\overline{\phi_1},-\overline{\phi_2},\overline{\phi_3}),
\qquad
\tau_2^*(\phi_1,\phi_2,\phi_3)=-(\overline{\phi_1},\overline{\phi_2},\overline{\phi_3}).
\]
\end{lemma}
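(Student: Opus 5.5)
The argument splits into two parts. First we check that $\tau_1,\tau_2$ are well-defined automorphisms of $\overline{M}$. Then we pull back each of the three $1$-forms directly.

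\textbf{Automorphisms.} All the parameters $a_k,b_k$ are real. Hence the right-hand side
\[
R(z)=\prod_{k=1}^m\frac{z-a_{2k}}{z-a_{2k-1}}\prod_{k=1}^n\frac{z-b_{2k-1}}{z-b_{2k}}
\]
of \eqref{eq:barM} satisfies $R(\overline{z})=\overline{R(z)}$. The same holds for the version with exponents $\alpha_k,\beta_k$. So if $w^2=R(z)$, then $(\pm\overline{w})^2=\overline{R(z)}=R(\overline{z})$, and $\tau_1,\tau_2$ map $\overline{M}$ to itself. Each is an involution, hence bijective. Each is anticonformal, since in the local coordinates $z$ (away from branch points) or $w$ (near branch points) it is complex conjugation composed with a holomorphic map. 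Both maps fix the fibres over $z=0$ and $z=\infty$ as sets, because $w(0)$ and $w(\infty)$ are real. Indeed $w(\infty)=\pm1$, and $w(0)^2=\prod a_{2k}/a_{2k-1}\cdots$ is real; in the normalized case it equals $1$. Thus $\tau_1,\tau_2$ restrict to $M$ as well. So "automorphism" here means an anti-holomorphic involution.

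\textbf{Pullbacks.} For an anti-holomorphic map $\tau$ and a meromorphic function $F$ with $F\circ\tau=\overline{F}$, we have $\tau^*(F\,dz)=\overline{F}\,d\overline{z}=\overline{F\,dz}$, because $z\circ\tau=\overline{z}$. Under $\tau_1$, we have $z\mapsto\overline{z}$ and $w\mapsto\overline{w}$, so $\tfrac{dz}{z}\mapsto\overline{\tfrac{dz}{z}}$. Consequently
\[
\tau_1^*\phi_1=-\tfrac12\left(\tfrac1{\overline w}+\overline w\right)\overline{\tfrac{dz}{z}}=\overline{\phi_1},\qquad \tau_1^*\phi_3=\overline{\phi_3}.
\]
For the second component, $\tau_1^*\phi_2=i\,\overline{\tfrac{dz}{z}}=-\overline{\phi_2}$, since $\overline{i}=-i$.

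Under $\tau_2$, $w\mapsto-\overline{w}$. Then $\tfrac1w\pm w\mapsto-\overline{\left(\tfrac1w\pm w\right)}$, which gives $\tau_2^*\phi_1=-\overline{\phi_1}$ and $\tau_2^*\phi_3=-\overline{\phi_3}$. The form $\phi_2$ does not involve $w$, so again $\tau_2^*\phi_2=-\overline{\phi_2}$.

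This is routine. The only points needing care are the reality of the branch values $a_k,b_k$ and the preservation of the removed end points. Verifying anti-holomorphicity at the branch points, in the local coordinate $w$, is the mild obstacle, and it follows because $R$ has real coefficients.
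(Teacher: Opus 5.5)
Your proposal is correct. It is exactly the direct verification the paper has in mind when it calls the lemma ``easy to prove'' (no proof is given there): reality of the $a_k,b_k$ gives invariance of $\overline{M}$, and your pullback computations for $\phi_1,\phi_2,\phi_3$ under both $\tau_1$ and $\tau_2$ all check out. Two small points do not affect the argument. First, the fibres over $z=0$ and $z=\infty$ are preserved as sets simply because $z\mapsto\overline{z}$ fixes real $z$; reality of $w(0)$ and $w(\infty)$ only shows that $\tau_1$ fixes those points individually. Second, near the poles of $R$ the local coordinate is $1/w$, not $w$.
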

The transformation $\tau_2$ demonstrates that there is a point reflection on the surface.  As we are constructing a graph, we do not want to utilize this.  Thus, we can restrict the domain to $w\geq 0$, in which case we can set
\begin{equation}\label{eq:m}
M=\C^*=\C\setminus\{0\}.
\end{equation}  
The transformation $\tau_1$ implies the surface has a vertical symmetry plane parallel to the $x_1x_3$-plane.  Therefore, the surface has fundamental domain 
\begin{equation}\label{eq:m1}
M_1=\{z\in\C^*:\im z\geq 0\}, 
\end{equation}
with the whole surface $f(M)$ obtained by taking the union of $f(M_1)$ with the reflection of $f(M_1)$ through that symmetry plane.

\begin{theorem}
Let $m,n\in\N$.  For each $j=1,2,\ldots ,m$ and $k=1,2,\ldots, n$, choose $a_j,b_k\in\R$ such that
\[
b_{2n}<b_{2n-1}<\cdots<b_2<b_1<0<a_1<a_2<\cdots<a_{2m-1}<a_{2m}.
\] 
For each $j=1,2,\ldots ,m$ and $k=1,2,\ldots, n$, choose $\alpha_k, \beta_j\in\{\pm1\}$.  
Define
\[
w^2=\prod_{k=1}^m\left(\frac{z-a_{2k}}{z-a_{2k-1}}\right)^{\alpha_k}\prod_{k=1}^n\left(\frac{z-b_{2k-1}}{z-b_{2k}}\right)^{\beta_k},
\]
\[
G=\frac{1+w}{1-w},
\qquad
\text{and} 
\qquad
dh=-\frac{1}{2}\left(\frac{1}{w}-w\right)\frac{1}{z}dz.
\]
Then $\left(\C^*,G,dh\right)$ is the Weierstrass data for a singly periodic maximal graph with two Scherk ends and $m+n$ cone-like singularities in the quotient with the following properties:
\begin{enumerate}
\item
The period at the ends is $\pm(0,2\pi,0)$.  There is no period problem at the cone-like singularities.
\item
For each $j=1,2,\ldots,m$ there is a cone-like singularity on $[a_{2j-1},a_{2j}]$ that points up if $\alpha_j=-1$ and points down if $\alpha_j=1$.
\item
For each $k=1,2,\ldots,n$ there is a cone-like singularity on $[b_{2k-1},b_{2k}]$ that points up if $\beta_k=1$ and points down if $\beta_k=-1$.
\item
As long as not all the cone-like singularities point in the same direction, it is possible to have both ends be horizontal.  
If all the cone-like singularities point up (resp. point down) then the end $z=0$ goes downward (resp. upward).  
\end{enumerate}
\label{thm:main}
\end{theorem}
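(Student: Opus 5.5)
We first make the Weierstrass data explicit and check that it defines a maxface via Theorem~\ref{th:w-type-rep}. With $G=(1+w)/(1-w)$ and $dh=-\tfrac12(1/w-w)\,dz/z$, a direct computation gives
\[
\phi_1=-\tfrac12\left(\tfrac1w+w\right)\tfrac{dz}{z},\qquad
\phi_2=\tfrac{i}{z}dz,\qquad
\phi_3=\tfrac12\left(\tfrac1w-w\right)\tfrac{dz}{z}.
\]
From these, $\phi_1^2+\phi_2^2-\phi_3^2=0$ holds identically and $\phi_2$ never vanishes on $\C^*$, so \eqref{eq:confomal} and \eqref{eq:complete} hold. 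At the branch points $a_k,b_k$, where $w$ has a zero or pole, $1/w\pm w$ behaves like $w^{\mp1}$, which has a simple pole in the local parameter $\sqrt{z-a_k}$. Since $dz=2\sqrt{z-a_k}\,d(\sqrt{z-a_k})$, the forms stay holomorphic. Because we restrict to the branch $w\ge 0$ on the real axis, the forms live on $\C^*$ after cutting; more precisely, we work on $M_1$ and reflect using Lemma~\ref{lem:auto}.

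\textbf{Singular set and cone-like points.} $S(f)=\{|G|=1\}=\{\Re w=0\}$. On $\R$ the function $w^2$ is real, and it is negative exactly on the intervals $(a_{2j-1},a_{2j})$ and $(b_{2k-1},b_{2k})$, where $w$ is purely imaginary. In $M_1$ we show $\Re w\neq 0$ off $\R$. For this we track $\arg w^2$ on the upper half-plane: it is a sum of angles subtended by the intervals, each lying in $(-\pi,\pi)$ with total in $(-\pi,\pi)$, so $\Re w>0$ there. This makes each component $S_0$ the preimage of one interval. On $\overline{M}$ that preimage is a closed curve, compact and doubled across the cut, so the component is the loop around the interval.

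We then apply Fact~(1). On the interval $\phi_3=\tfrac12(1/w-w)dz/z$ and $dG/(G\,dh)$ can be computed. Writing $w=it$ with $t\in\R$ gives $G=(1+it)/(1-it)\in S^1$ and $dG/G=2i\,dt/(1+t^2)$. Also $dh=-\tfrac12(-i/t-it)\,dz/z=\tfrac{i}{2}\,\tfrac{1+t^2}{t}\,\tfrac{dz}{z}$, so the ratio is real. It is nonzero because $dG\neq0$, since $t$ runs monotonically from $0$ to $\pm\infty$ along each side. Hence $S_0$ consists of generalized cone-like points.

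For Fact~(2), $G|_{S_0}$ is injective because $t$ covers $\R$ exactly once around the loop. Moreover $dh/G$ does not vanish, because $(1+t^2)/t\neq0$ at the endpoints when measured in the local parameter. Thus each interval gives one cone-like singularity, yielding $m+n$ in the quotient.

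\textbf{Periods.} At a cone point, the period around the loop $S_0$ equals twice the integral along the interval. This is real-zero by the symmetry $\tau_1$: on the interval $\phi_1,\phi_3$ are imaginary multiples of $dz/z$ and $\phi_2=i\,dz/z$, so the real parts vanish. The period at the ends follows from residues. At $z=0$ and $z=\infty$, $w\to$ a finite nonzero value, and $\Re\oint(\phi_1,\phi_2,\phi_3)=\Re\bigl(2\pi i\cdot(\ast,i,\ast)\bigr)$, where the entries marked $\ast$ are real. This gives $\pm(0,2\pi,0)$, proving (1).

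\textbf{Direction of the cones, graph property, and ends.} We determine the direction of each cone from the sign of $x_3$ relative to the cone point. This amounts to the sign of $\Re\int\phi_3$ leaving the interval into the upper half-plane, or equivalently the sign of $\log$-type growth of $1/w-w$ near where $w$ blows up or vanishes. This is decided by whether $a_{2j-1}$ is a pole ($\alpha_j=1$) or a zero of $w$, and it gives (2) and (3). We expect this to be the main obstacle, together with proving that $f$ is a graph.

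For the graph property, we show that the projection $(x_1,x_2)$ restricted to $M_1\setminus S(f)$ is a local diffeomorphism ($|G|\neq1$) and proper onto a half-strip, and then invoke a degree/boundary argument on the reflected domain. The vertical height at the ends is governed by $\Re\int\phi_3$ near $0$ and $\infty$. Here $w(0)^2=\prod(a_{2k}/a_{2k-1})^{\alpha_k}\cdots$, and $w(0)=1$ can be achieved by adjusting the parameters exactly when the factors are not all on one side of $1$, that is, when the cones do not all point the same way. When all cones point up, $w(0)>1$ or $w(0)<1$ forces $\phi_3$ to have a residue of fixed sign at $0$, so the end goes downward, giving (4).
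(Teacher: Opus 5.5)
Your singular-set argument (the angle sum showing $\Re w>0$ on the open upper half-plane), your check of Fact (1)--(2) and the period computations are essentially sound, and in places more complete than the paper's. The genuine gap is in properties (2) and (3). You propose reading off the direction of a cone from the sign of $\Re\int\phi_3$ as one leaves the slit, but you never compute that sign. Saying it ``is decided by whether $a_{2j-1}$ is a pole or a zero of $w$'' only restates the claim: the content of (2) and (3) is precisely \emph{which} choice of $\alpha_j$ or $\beta_k$ gives an upward cone.

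The paper settles this by evaluating $G$ at the slit endpoints: $G(a_{2j-1})=-\alpha_j$, $G(a_{2j})=\alpha_j$, $G(b_{2k-1})=\beta_k$, $G(b_{2k})=-\beta_k$. Hence the limiting normals on the two sides of a cone are parallel to $(\mp1,0,1)$. It combines this with the orientation of the boundary rays: $f_2=-\arg z$ is constant on each ray, and since $w>0$ off the slits, $f_1$ is strictly decreasing along $\R_{>0}$ and increasing along $\R_{<0}$.

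Your own angle argument would close the gap equally well. Approach an interior point $x$ of $(a_{2j-1},a_{2j})$ from the upper half-plane. Then $\arg w^2\to\alpha_j\pi$, so $w\to is$ with $\operatorname{sign}s=\alpha_j$, and $\partial_y f_3=\frac{1}{2x}\left(\frac1s+s\right)$ has the sign of $\alpha_j$. So for $\alpha_j=1$ the surface rises as one leaves the slit into $M_1$, i.e.\ the cone points down. On a $b$-slit the factor $1/x<0$ reverses the sign, which gives (3).

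Two further steps are only sketched. For the graph property, your ``degree/boundary argument'' needs exactly the boundary information above. The two rays project to the lines $x_2=0$ and $x_2=-\pi$. They do so injectively, because $f_1$ is monotone on each ray and each slit collapses to a single point. Also $f_1\to+\infty$ at $z=0$ and $f_1\to-\infty$ at $z=\infty$, so the image of $M_1$ is a full strip, not a half-strip. Combined with $|G|>1$ on $M_1\setminus S(f)$, this gives the graph.

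For (4), your residue argument is a genuinely different route from the paper's one-line appeal to the maximum principle. It is more explicit, since it also produces horizontal ends in the mixed case by an intermediate value argument. However, you leave the key sign open (``$w(0)>1$ or $w(0)<1$''). It is fixed as follows. Since $a_{2k}/a_{2k-1}>1$ and $0<b_{2k-1}/b_{2k}<1$, by (2) and (3) each downward cone contributes a factor $>1$ to $w(0)^2$ and each upward cone a factor $<1$. If all cones point up, then $w(0)<1$ and the residue $\frac12\left(\frac{1}{w(0)}-w(0)\right)$ of $\phi_3$ at $0$ is positive. Thus $f_3\approx\frac12\left(\frac{1}{w(0)}-w(0)\right)\log|z|\to-\infty$ while $f_1\to+\infty$, so the end at $0$ goes down. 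This is relative to the end at $\infty$, which is horizontal because $w(\infty)=1$.

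A minor slip: $\phi_2=i\,dz/z$ does vanish at the branch points in the local parameter $\sqrt{z-a_k}$. There \eqref{eq:complete} holds because $\phi_1$ and $\phi_3$ do not vanish.
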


The following three lemmas comprise the bulk of the proof of Theorem \ref{thm:main}.  Lemmas \ref{lem:conepts} and \ref{lem:conedir} prove properties (2) and (3) from Theorem \ref{thm:main}, and Lemma \ref{lem:embedded} proves that the surface defined in Theorem \ref{thm:main} is a graph.

\begin{lemma}
The intervals $[a_{2j-1},a_{2j}]$ and $[b_{2k-1},b_{2k}]$ consist of cone-like singular points for each $j=1,2,\ldots,m$ and $k=1,2,\ldots,n$. 
\label{lem:conepts}
\end{lemma}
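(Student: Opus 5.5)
The plan is to identify the singular set $S(f)=\{|G|=1\}$ explicitly and then verify the criteria of the Fact quoted from \cite[Lemmas 2.3 and 2.4]{FRUYY2} on each of its components. Write $g(z)=w^2$, a real rational function of $z$ of degree $m+n$. Since $G=(1+w)/(1-w)$, we have $|G|=1$ if and only if $|1+w|=|1-w|$, that is, $\Re w=0$. Equivalently, $g(z)\in(-\infty,0]$ or $w=\infty$, so the singular set is determined by $g^{-1}([-\infty,0])$.

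\textbf{Step 1: locating the singular set.} I first show that $g^{-1}([-\infty,0])$ lies on the real axis.
\begin{itemize}
\item For $\im z>0$, each factor $(z-c)/(z-d)$ with $c,d$ the endpoints of one of the intervals $[a_{2j-1},a_{2j}]$ or $[b_{2k-1},b_{2k}]$ has argument equal, up to sign, to the angle that interval subtends at $z$.
\item These intervals are disjoint, so the absolute values of these angles sum to less than $\pi$.
\item Hence $\arg g(z)\in(-\pi,\pi)$ and $g(z)\neq 0,\infty$, so $g(z)\notin[-\infty,0]$. The lower half plane follows by conjugation, using $\tau_1$ of Lemma~\ref{lem:auto}.
\end{itemize}
On the real axis, a sign count shows $g<0$ exactly on the open intervals $(a_{2j-1},a_{2j})$ and $(b_{2k-1},b_{2k})$. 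At their endpoints $g$ is $0$ or $\infty$, and these endpoints are branch points of $w$. Therefore the preimage in the Riemann surface of each closed interval is a compact closed curve $S_0$. It consists of the two lifts, where $w\in i\R_{>0}$ and $w\in i\R_{<0}$, glued at the two branch points. These curves are the connected components of $S(f)$, and none of them meets the ends $z=0,\infty$.

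\textbf{Step 2: the reality condition.} A direct computation gives
\[
\frac{dG}{G\,dh}=\frac{-4zw\,dw}{(1-w^2)^2\,dz}=\frac{-2z\,g'(z)}{(1-g)^2}.
\]
On an open interval, $z$, $g$ and $g'$ are all real, so this quantity is real. It remains to show it is nonzero, i.e.\ $g'\neq 0$ there, and I expect this to be the main obstacle. My approach is a degree count.
\begin{itemize}
\item $g$ has degree $m+n$.
\item On each of the $m+n$ negative intervals, $g$ runs continuously from $0$ to $-\infty$ (or the reverse), so it attains every negative value at least once per interval.
\item Hence each negative value is attained exactly once per interval, with multiplicity one, so $g'\neq0$ and $g$ is strictly monotone there.
\end{itemize}
At the branch points I will redo the computation in a local coordinate $\zeta$, with $\zeta^2=z-c$ where $g(c)=0$, or the analogous coordinate where $g(c)=\infty$. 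There $w\sim \mathrm{const}\cdot\zeta^{\pm1}$, and I will check that $dG\neq0$ and that the ratio is still real and nonzero, using that $\zeta$ is real or purely imaginary along $S_0$. This also yields non-degeneracy, $dG\neq0$, on all of $S_0$. By part (1) of the Fact, $S_0$ consists of generalized cone-like singular points.

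\textbf{Step 3: cone-like.} For part (2) of the Fact, two properties must be checked on $S_0$.
\begin{itemize}
\item \emph{Injectivity of $G|_{S_0}$.} By the monotonicity from Step 2, $w$ runs injectively over $i\R_{>0}$ on one lift and over $i\R_{<0}$ on the other. The map $w\mapsto(1+w)/(1-w)$ sends $i\R\cup\{\infty\}$ bijectively onto the unit circle. So $G|_{S_0}$ is injective, with $G=1$ at the zero of $w$ and $G=-1$ at the pole.
\item \emph{Nonvanishing of $dh/G$.} We have $dh/G=-(1-w)^2\,dz/(2wz)$. This is nonzero on the open lifts because $w\neq 0,1,\infty$ and $z\neq0$ there. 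At the branch points it is nonzero by the same local-coordinate computation as in Step 2.
\end{itemize}
Hence every point of $S_0$ is a cone-like singular point, which proves the lemma.
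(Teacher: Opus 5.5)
Your argument is correct, but it reaches the conclusion by a different route than the paper.

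\textbf{Locating $S(f)$.} The paper uses only the degree count: each of the $m+n$ intervals already covers $(-\infty,0)$ once, so $g=w^2$ has no other preimages of $[-\infty,0]$. Since you run exactly this count in Step~2, your angle estimate in Step~1 is redundant, though correct.

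\textbf{Why each component maps to a point.} The paper does not use the criterion $dG/(G\,dh)\in\R\setminus\{0\}$. It uses the symmetry $\tau_2(z,w)=(\bar z,-\bar w)$ of Lemma~\ref{lem:auto}. Points with $z$ real and $w\in i\R\cup\{\infty\}$ are fixed by $\tau_2$, and $\tau_2^*\phi=-\overline{\phi}$ forces $\Re\phi$ to vanish along this fixed set, so $f$ is constant there.

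\textbf{What the paper leaves implicit.} It only asserts non-degeneracy, on the grounds that the components are regularly parametrized segments. It also never checks the two conditions in part~(2) of the Fact. Your proof supplies these, which is exactly what separates cone-like from generalized cone-like points:
\begin{enumerate}
\item the explicit formula $-2zg'(z)/(1-g)^2$;
\item monotonicity of $g$ from the degree count;
\item the local computations at the branch points;
\item injectivity of $G|_{S_0}$ and $dh/G\neq0$.
\end{enumerate}
You also correctly treat each $S_0$ as a closed curve on the double cover, two lifts glued at the branch points, rather than as a segment in $\C^*$. That is the setting in which compactness and the Fact genuinely apply.

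In short, the paper's route is shorter and reuses the symmetry that also drives Lemma~\ref{lem:embedded}. Yours is longer but verifies every hypothesis of the cone-like criterion.
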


\begin{proof}
By Definition \ref{df:cone}, $S(f)=\{z\in \C^*\;;\;|G(z)|=1\}$.  Note that we can define $G(z)=-1$ at the $m+n$ points $a_j$ and $b_k$ that are in the denominator of $w^2$, as $w^2(z)=\infty$ at those points.  Otherwise, observe that 
\[
|G|^2=G\overline{G}=\frac{1+w}{1-w}\frac{1+\overline{w}}{1-\overline{w}}=\frac{1+2 \re w+|w|^2}{1-2\re w+|w|^2}.
\]
Thus, $|G|=1$ if and only if
$\re w=0$.  Also, $\re w=0$ if and only if $w^2\leq 0$.  Hence, $|G|^2=1$ if and only if $w^2\leq 0$, and so $S(f)=\{z\in \C^*\;;\;w^2(z)\leq 0 \text{ or } w^2(z)=\infty\}$.  

Note that the degree of $w^2$ is $m+n$.  
If $x\in(b_{2l},b_{2l-1})$ for some $l\in\{1,2,\ldots, n\}$ then 
\begin{align*}
\left(\frac{x-a_{2j}}{x-a_{2j-1}}\right)^{a_j}&>0,\qquad j=1,2,\ldots,m, \\
\left(\frac{x-b_{2k-1}}{x-b_{2k}}\right)^{\beta_k}&>0,\qquad k=1,2,\ldots,n,\;\; k\neq l,
\end{align*}
and
\[
\left(\frac{x-b_{2l-1}}{x-b_{2l}}\right)^{\beta_l}<0.
\]
Hence, $w^2(x)<0$ if $x\in(b_{2l},b_{2l-1})$, and $w^2((b_{2l},b_{2l-1}))=(-\infty,0)$.  Similarly, $w^2(a_{2j-1},a_{2j})=(-\infty,0)$ for $j=1,2,\ldots, m$.  
Also, $w^2(z)=0$ only at the $m+n$ points $a_j$ and $b_k$ that are in the numerator of $w^2$.  Since $w^2$ has degree $m+n$, $w^2(z)\leq 0$ or $w^2(z)=\infty$ only when 
\[
z\in\left(\cup_{j=1}^m[a_{2j-1},a_{2j}]\right)\cup\left(\cup_{k=1}^n[b_{2k},b_{2k-1}]\right), 
\]
and $w^2|_{(a_{2j-1},a_{2j})}$ is one-to-one map onto $(-\infty,0)$ for $j=1,2,\ldots, m$ 
and $w^2|_{(b_{2k},b_{2k-1})}$ is one-to-one map onto $(-\infty,0)$ for $k=1,2,\ldots, n$.  

Hence,
\[
S(f)=\left(\cup_{j=1}^m[a_{2j-1},a_{2j}]\right)\cup\left(\cup_{k=1}^n[b_{2k},b_{2k-1}]\right).
\]
Since each connected component of $S(f)$ is a line segment, they can be parametrized as a regular curve in $M_1$.  Therefore, $S(f)$ consists of non-degenerate singular points. 


Note that if $x\in S(f)$ then $w^2(x)\leq0$ or $w^2(x)=\infty$ and so $w(x)\in i\R_{\geq0}\cup\{\infty\}$.  
Thus each $(x,w(x))\in S(f)$ is a fixed point under $\tau_2$ defined in Lemma~\ref{lem:auto}, and hence, $f$ is constant on each interval in $S(f)$, by Lemma~\ref{lem:auto}. 
That is, each interval $[b_{2k}, b_{2k-1}]$ and $[a_{2j-1},a_{2j}]$ consists of cone-like singular points for each $j=1,2,\ldots, m$ and $k=1,2,\ldots, n$.
\end{proof}

%
%
\begin{lemma}
For each $j=1,2,\ldots,m$, the cone-like singularity on the interval $[a_{2j-1},a_{2j}]$ points up if $\alpha_j=1$ and points down if $\alpha_j=-1$.  For each $k=1,2,\ldots,n$, the cone-like singularity on the interval $[b_{2k-1},b_{2k}]$ points down if $\beta_k=1$ and points up if $\beta_k=-1$.
\label{lem:conedir}
\end{lemma}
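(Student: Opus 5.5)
The plan is to read off the direction of each cone from the height function $x_3=\Re\int\phi_3$ along the real axis, just outside each singular interval. By the definition of a cone-like singular point and Lemma~\ref{lem:conepts}, each interval $I=[a_{2j-1},a_{2j}]$ or $[b_{2k},b_{2k-1}]$ is mapped to a single point $p_I$. A punctured neighborhood $U\setminus I$ is embedded and is asymptotic at $p_I$ to a half of the light cone, so it lies entirely on one side (above or below) of the horizontal plane through $p_I$. Hence it suffices to exhibit one curve in $U\setminus I$ emanating from $I$ and decide whether $x_3$ increases or decreases along it. The natural choice is the real axis just beyond an endpoint of $I$. There $w^2>0$, so with the branch $w\ge 0$ the form
\[
\phi_3=\tfrac12\left(\tfrac1w-w\right)\tfrac{dz}{z}
\]
is real, and $x_3$ is simply the real integral of this form. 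These real segments are the images of the fixed set of $\tau_1$, i.e.\ curves in the symmetry plane of Lemma~\ref{lem:auto}.

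The key computation is the sign of $\phi_3$ near the endpoints. Take $I=[a_{2j-1},a_{2j}]$ with $\alpha_j=1$, so that $w(a_{2j})=0$ and $w(a_{2j-1})=\infty$. For $x$ slightly larger than $a_{2j}$ we have $x>0$, $dx>0$, and $1/w\to+\infty$, hence $\phi_3>0$ and $x_3$ increases as we leave the vertex: the cone points up. As a consistency check, move left from $a_{2j-1}$: then $dx<0$ and $-w\,dx/x>0$ dominates, so again $x_3$ increases. For $\alpha_j=-1$ the roles of zero and pole are swapped, every sign flips, and the cone points down. For $I=[b_{2k},b_{2k-1}]$ with $\beta_k=1$, $w$ vanishes at $b_{2k-1}$. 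Moving right from it we have $dx>0$ but $x<0$, so $(1/w)\,dx/x<0$ and $x_3$ decreases: the cone points down. Again $\beta_k=-1$ reverses this.

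To make this rigorous, the leading term must dominate near the endpoint. Since $w^2$ has a simple zero or pole there, $1/w$ (respectively $w$) blows up like $|x-x_0|^{-1/2}$, while the other term stays bounded. The integral of $\phi_3$ therefore converges, so $x_3$ is continuous up to the vertex, and $\phi_3$ has a definite sign on a short interval. Thus $x_3(x)-x_3(p_I)$ has the claimed sign for $x$ near the endpoint.

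The main obstacle is justifying that the sign along this single ray determines the side of the whole punctured neighborhood. I would argue this from the local cone structure: the Gauss map $G$ tends to $|G|=1$ on $I$ and stays on one side of the unit circle in $U\setminus I$ (by the computation of $|G|$ in Lemma~\ref{lem:conepts}, $|G|>1$ exactly where $\Re w>0$). Hence the unit normal lies in a single sheet of $H^2$ near $I$, and the spacelike surface near $p_I$ is a graph over a punctured disk lying inside one half of the light cone. Since $f(U\setminus I)$ is connected and never meets the horizontal plane through $p_I$ near $p_I$ (it is spacelike and approaches the light cone), it lies entirely on the side selected by the real ray.
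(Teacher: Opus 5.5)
Your sign analysis of $\phi_3$ is correct. For $\alpha_j=1$ the height $x_3$ increases as you leave the vertex along the real axis on either side, so the profile in the symmetry plane has its apex at the bottom. For $\beta_k=1$ it decreases, so the apex is on top.

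The step that fails is the final identification: ``$x_3$ increases away from the vertex, so the cone points up.'' The paper fixes the opposite convention in the first sentence of its proof. A cone points up when, passing through the vertex from right to left (decreasing $x_1$), the normal changes from $\parallel(-1,0,1)$ to $\parallel(1,0,1)$. That means $dx_3/dx_1=-1$ on the right and $+1$ on the left, so the apex is on top. This is also the convention under which property (4) of Theorem~\ref{thm:main} holds. For example, take $m=1$, $n=0$, $\alpha_1=-1$. Your computation puts the apex on top, while $G(0)=\frac{1+w(0)}{1-w(0)}>1$ makes the end $z=0$, which sits at $x_1\to+\infty$, descend.

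The paper's argument uses three ingredients:
\begin{enumerate}
\item the endpoint values $G(a_{2j-1})=-\alpha_j$, $G(a_{2j})=\alpha_j$, $G(b_{2k-1})=\beta_k$, $G(b_{2k})=-\beta_k$;
\item the limiting normals $\parallel(\mp1,0,1)$ where $G=\pm1$;
\item implicitly, the monotonicity of $f_1$ on the real axis from Lemma~\ref{lem:embedded}, to tell which endpoint lies on the right.
\end{enumerate}
From these it concludes that the cone on $[a_{2j-1},a_{2j}]$ points up iff $\alpha_j=-1$, and the cone on $[b_{2k},b_{2k-1}]$ points up iff $\beta_k=1$. This is exactly items (2)--(3) of Theorem~\ref{thm:main}, and it is exactly what your computation shows.

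So the lemma as printed has ``up'' and ``down'' interchanged relative to its own proof and the theorem. Your proposal reaches the printed wording only because you silently reversed the meaning of ``points up.'' State your convention explicitly. With the paper's convention, your argument proves the corrected statement.

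As a route, your argument is a sound alternative to the paper's Gauss-map argument and arguably cleaner: you integrate $\phi_3$ and use that $1/w$ or $w$ dominates like $|x-x_0|^{-1/2}$ at a simple zero or pole of $w^2$. It compares heights with $x_3(p_I)$ directly and needs none of the right/left bookkeeping that the paper leaves implicit.

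Your final paragraph, however, is unnecessary for the paper's notion, which concerns only the profile $f(\R)$ in the symmetry plane. It also does not work as written, for two reasons.
\begin{enumerate}
\item The normal staying in one sheet of $H^2$ holds for every connected spacelike surface. It says nothing about which side of the horizontal plane through $p_I$ the surface lies on.
\item A spacelike graph through $p_I$ satisfies $|x_3-x_3(p_I)|\le r$, where $r$ is the horizontal distance. So it lies outside, not inside, the light cone at $p_I$.
\end{enumerate}
To place all of $f(U\setminus I)$ on one side you would need the asymptotics $x_3-x_3(p_I)=\pm r+o(r)$ at a cone-like singularity, which you assert without proof.
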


\begin{proof}
If a cone-like singularity points up then, going from right to left at the cone-like singularity, the normal goes from parallel to $(-1,0,1)$ to parallel to $(1,0,1)$.  If a cone-like singularity points down then, going from right to left at the cone-like singularity, the normal goes from parallel to $(1,0,1)$ to parallel to $(-1,0,1)$.  

The Gauss map is $\nu=\sigma^{-1}\circ G$, the composition of the inverse of stereographic projection with $G$.  Hence,
\[
\frac{\nu}{|\nu|_E}=\frac{(-2\re(G(z)),-2\im(G(z)),|G(z)|^2+1)}{\sqrt{|G(z)|^4+6|G(z)|^2+1}}, 
\]
where $|\nu|_E$ gives the Euclidean norm of $\nu$. 
Thus, if $G(z)=1$ (resp. $G(z)=-1$) then 
$$
\frac{\nu}{|\nu|_E}=\left(-\frac{1}{\sqrt{2}},0,\frac{1}{\sqrt{2}}\right)
\qquad
\left(\text{resp. }\frac{\nu}{|\nu|_E}=\left(\frac{1}{\sqrt{2}},0,\frac{1}{\sqrt{2}}\right)\right).
$$
Since $G(a_{2j-1})=-\alpha_j$ and $G(a_{2j})=\alpha_j$, the cone-like singularity on the interval $[a_{2j-1},a_{2j}]$ points up if $\alpha_j=-1$ and points down if $\alpha_j=1$.  Since $G(b_{2k-1})=\beta_k$ and $G(b_{2k})=-\beta_k$, the cone-like singularity on the interval $[b_{2k-1},b_{2k}]$ points up if $\beta_k=1$ and points down if $\beta_k=-1$. 
\end{proof}

\begin{lemma}\label{lem:embedded}
$f(M)$ is a graph over the $x_1x_2$-plane, 
where $M$ is as in \eqref{eq:m}.
\end{lemma}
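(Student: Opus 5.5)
We will show that the projection $\pi\circ f$ onto the $x_1x_2$-plane, restricted to $M\setminus S(f)$, is a local diffeomorphism with injective image (modulo the period $(0,2\pi,0)$), and that the cone-like singular intervals of Lemma~\ref{lem:conepts} each map to a single point. Together these say that $f(M)$ is an entire graph. The natural coordinates come from the data:
\[
\phi_1=-\tfrac12\left(\tfrac1w+w\right)\tfrac{dz}{z},\qquad \phi_2=i\,\tfrac{dz}{z}.
\]
Hence $x_2=\re\int i\,dz/z=-\arg z$, and on $M_1=\{\im z\ge0\}$ this gives $x_2\in[-\pi,0]$. The level curves of $x_2$ are therefore the rays $\{\arg z=\theta\}$.

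\textbf{Strategy.} Fix $\theta\in[0,\pi]$ and the ray $z=re^{i\theta}$, $r\in(0,\infty)$. Along this ray,
\[
\frac{dx_1}{dr}=-\frac{1}{2r}\,\re\left(\frac1w+w\right).
\]
It then suffices to prove two things:
\begin{itemize}
\item $x_1$ is strictly monotone in $r$ off $S(f)$;
\item $x_1$ sweeps out all of $\R$ along each ray.
\end{itemize}
Monotonicity is equivalent to $\re(w+1/w)$ having constant sign. Since $\re(1/w)=\re w/|w|^2$, we have
\[
\re\left(w+\tfrac1w\right)=\re w\,\left(1+|w|^{-2}\right).
\]
So the sign of $dx_1/dr$ is the sign of $\re w$, and on the branch $w\ge0$ of \eqref{eq:m} we have $\re w\ge0$, with equality exactly on $S(f)$ by the proof of Lemma~\ref{lem:conepts}. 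Consequently $x_1$ is strictly decreasing along every ray, and the only stationary points are on the real axis where $w^2\le0$.

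\textbf{Surjectivity.} As $r\to0$ or $r\to\infty$, the factor $w+1/w$ tends to $w(0)+1/w(0)>0$ or to $2$ respectively. The integrand $dr/r$ then makes $x_1\to\pm\infty$ logarithmically, so each ray maps onto all of $\R$. On the rays $\theta=0$ and $\theta=\pi$, which contain the singular intervals, $x_1$ is only non-increasing: it is constant on each interval of $S(f)$, consistent with $f$ being constant there by Lemma~\ref{lem:conepts}. Away from $S(f)$ it is strictly decreasing, so the image of these boundary rays is still injective except that each singular interval collapses to a point, namely the isolated singularity.

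\textbf{Assembling the graph.} The map $(r,\theta)\mapsto(x_1,x_2)$ is therefore a homeomorphism from $M_1$ modulo the collapsed intervals onto the strip $\R\times[-\pi,0]$. Reflecting via $\tau_1$ from Lemma~\ref{lem:auto} gives the lower half-plane with $x_2\in[0,\pi]$. The period $2\pi$ in $x_2$ then tiles the plane, so $f(M)$ is a graph over the whole $x_1x_2$-plane with isolated singular points.

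The main obstacle is the behaviour near the real axis. One must check that the rays close to $\theta=0,\pi$ converge uniformly, so that the limiting map remains injective and continuous, and that no two distinct boundary points collapse except within a single singular interval. A further delicate point is near the points where $w=\infty$: there $\phi_1$ remains integrable because $1/w\to0$ and $w$ has only square-root growth.
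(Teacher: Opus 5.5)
Your proof is correct, but it takes a different global route from the paper's.

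\textbf{The paper's route.} It uses the pointwise inequality $\re w\ge 0$ (equivalently $|G|\ge 1$) only to say that the Gauss map points up, so the projection of $f(M_1)$ is a submersion. It then checks monotonicity of $f_1$ only along the two boundary rays, the positive and negative real axes, which project into the lines $x_2=0$ and $x_2=-\pi$. From ``submersion with injective boundary projection'' it concludes that $f(M)$ is a graph.

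\textbf{Your route.} You use that $x_2=-\arg z$ exactly, so the level sets of $x_2$ are the rays $\arg z=\theta$. You then apply $\re(w+1/w)=\re w\,(1+|w|^{-2})$ on every ray, not just the boundary ones. Pointwise this is the same fact as the paper's: the Jacobian of $(r,\theta)\mapsto(x_1,x_2)$ is $\frac{1}{2r}\re w\,(1+|w|^{-2})$.

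\textbf{What each buys.} Your version needs no topology. Injectivity of the projection follows directly from monotonicity along each ray. The logarithmic divergence of $x_1$ at $z=0$ and $z=\infty$ (using $w(0)>0$ and $w(\infty)=1$) gives surjectivity onto the strip, so you also prove that the graph is entire. The paper's passage from a local diffeomorphism with injective boundary to global injectivity implicitly needs a properness or degree argument at the ends, which it leaves unstated, and it does not address surjectivity. In return, the paper's argument is shorter and more geometric. It only uses that the Gauss map stays on one side of the singular set. Yours relies on the special form $\phi_2=i\,dz/z$.

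\textbf{Your flagged obstacle is milder than you suggest.}
\begin{itemize}
\item Continuity: $w$ extends continuously from the open upper half-plane to the real axis, and $\phi_1$ is holomorphic at the branch points. Hence $f$ is continuous on $M_1$ and no separate ``uniform convergence of rays'' is needed.
\item Boundary injectivity: you already proved it on the rays $\theta=0,\pi$.
\item Properness: $w+1/w$ converges uniformly in $\theta$ as $r\to 0$ and as $r\to\infty$. So $x_1\to\pm\infty$ uniformly, the projection is proper, and the inverse map and the graph function are continuous.
\item Gluing along $x_2=\pm\pi$: this is consistent because $f$ is constant on the whole preimage of each slit, both sides included, as in Lemma~\ref{lem:conepts}.
\end{itemize}
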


\begin{proof}
We show that $f=(f_1,f_2,f_3)$ is a graph by showing that $f(M_1)$ is a graph over the $x_1x_2$-plane, 
where $M_1$ as in \eqref{eq:m1} is the fundamental domain for our surface.  Similar to what was done in the proof of Lemma \ref{lem:conepts}, $|G|\geq 1$ if and only if $\re w\geq 0$, which holds since $w$ is a square root.  Hence, $|G(z)|\geq 1$ for all $z\in M_1$, and $|G(z)|>1$ for all $z\in M_1\setminus S(f)$.  Therefore, the Gauss map always points up on $f(M_1)$.

Since $w\in i\R_{\geq 0}\cup\{\infty\}$ on $S(f)$, $f$ is constant on each interval $[a_{2j-1},a_{2j}]$ and $[b_{2j},b_{2j-1}]$ for $j=1,2,\ldots,m$ and $k=1,2,\ldots,n$.  If $z\in\R_{>0}\setminus (S(f)\cup\{0\})$ then $w\in\R_{>0}$, in which case $f_1$ is decreasing there.  If $z\in\R_{<0}\setminus (S(f)\cup\{0\})$ then $w\in\R_{>0}$, in which case $f_1$ is increasing there.  Hence, the projections of the images of the negative and positive real axes onto the $x_1x_2$-plane are embedded.  Note that since $f_2(z)=-\text{Arg}(z)$, the projections of the images of the negative and positive real axes will lie in lines parallel to the $x_2$-axis that are a distance $\pi$ apart from each other.  Since the Gauss map always points up on the surface, the projection of $f(M_1)$ onto the $x_1x_2$-plane is a submersion.  Hence, $f(M)$ is a graph over the $x_1x_2$-plane. 
\end{proof}

Now we can prove Theorem~\ref{thm:main}.
 
\begin{proof}[Proof of Theorem~\ref{thm:main}]
By Theorem \ref{thm:maxface} the Weierstrass representation from Theorem \ref{thm:main} is the Weierstrass representation of a maxface.  Lemma \ref{lem:embedded} proves that the surface defined in Theorem \ref{thm:main} is a graph.

The periods at the ends can be computed as residues.  Since the residues of $\phi_1$ and $\phi_3$ are real at $z=0$ and $z=\infty$, the residue of $\phi_2$ at $z=0$ is $i$, and the residue of $\phi_2$ at $z=\infty$ is $-i$, we see that
\[
\re\int_{C(0)}(\phi_1,\phi_2,\phi_3)=(0,-2\pi,0) 
\]
and
\[
\re\int_{C(\infty)}(\phi_1,\phi_2,\phi_3)=(0,2\pi,0), 
\]
where $C(z)$ is a closed curve that surrounds $z$ counterclockwise.
As there are no other periods, property (1) from Theorem \ref{thm:main} is proven.

Lemmas \ref{lem:conepts} and \ref{lem:conedir} prove properties (2) and (3) from Theorem \ref{thm:main}.

Property (4) from Theorem \ref{thm:main} is a consequence of the maximum principle.
\end{proof}

\section{Examples}
\label{sec:ex}

Examples of singly periodic maximal graphs with cone-like singularities derived from Theorem \ref{thm:main} split into categories based on the number of cone-like singularities along the positive and negative real axes.  There will always be at least one cone.  We assume there is always at least one cone along the positive real axis.  A singly periodic maximal graph of type $(m,n)$ will have $m\geq 1$ cones along the positive real axis and $n\geq 0$ cones along the negative real axis.  Utilizing reflections if necessary, we can assume that $m\geq n$.  Each of the $m+n$ cones can point up or down.  After reflecting if necessary, assume that the cone corresponding to the interval $(a_1,a_2)$ points up.  Otherwise, there are $2^{m+n-1}$ choices for the direction of the remaining $m+n-1$ cones.  Hence, there are up to $2^{m+n-1}$ distinct types of maximal graphs of type $(m,n)$.  We will see that there are fewer for some types due to some types being equivalent up to rotation/reflection.

As shown in \cite{FLS2}, if we remove the vertical symmetry plane then the moduli space for these surfaces has dimension $3(m+n)+1$.  With the extra assumptions of a vertical symmetry plane and placing the cone-like singularities along the images of the positive and negative real axes, the dimension of the moduli space of our examples is smaller.  There are $2(m+n)$ real parameters in the Weierstrass representation.  However, we only fixed the location of two points, placing the ends at $0$ and $\infty$.  Therefore we can fix the location of one of the other parameters.  Hence, the dimension of the moduli space of the surfaces derived from Theorem \ref{thm:main} is $2(m+n)-1$.  The $2(m+n)-1$ parameters primarily control the location along the image of the positive or negative real axes and size of the cone-like singularities and also control the angle of the end at $z=0$.


\subsection{Examples with four cone-like singularities}
There are seventeen different surfaces with four cone-like singularities given by Theorem \ref{thm:main}.  There are six surfaces of type $(4,0)$.  In this case, all four cones are along the image of the positive real axis.  See Figure \ref{fig:(4,0)}.
Note that in all figures of maximal graphs in this section, the vertical direction indicates the timelike axis.  
\begin{figure}[t]
  \centering
  \begin{subfigure}[b]{0.3\textwidth}
    \centering
    \includegraphics[width=\textwidth]{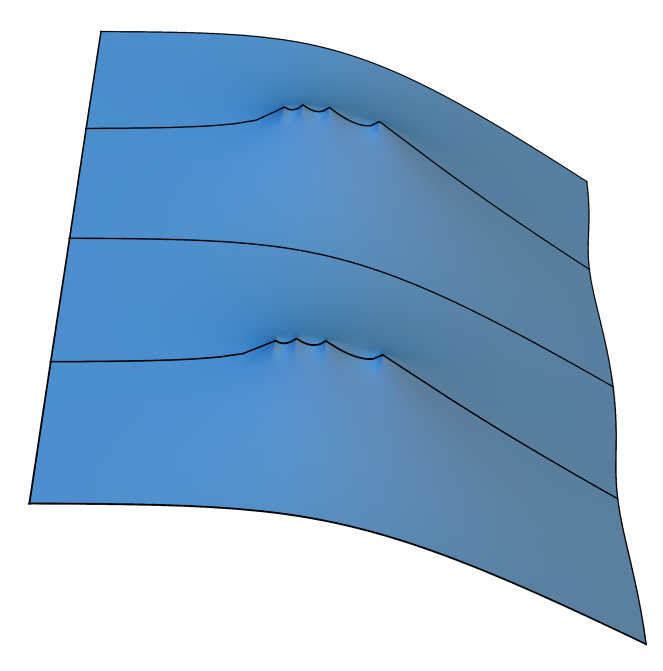}
    \caption*{}
  \end{subfigure} \hspace{.1in}
  ~ 
  \begin{subfigure}[b]{0.3\textwidth}
    \centering
    \includegraphics[width=\textwidth]{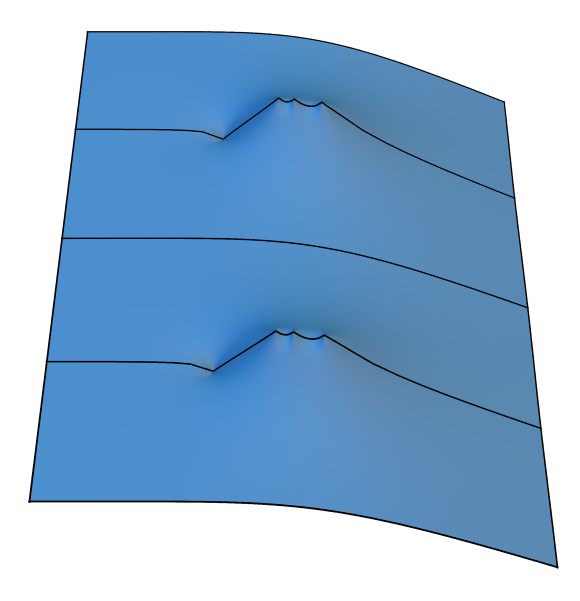}
    \caption*{}
  \end{subfigure} \hspace{.1in}
  ~
  \begin{subfigure}[b]{0.3\textwidth}
    \centering
    \includegraphics[width=\textwidth]{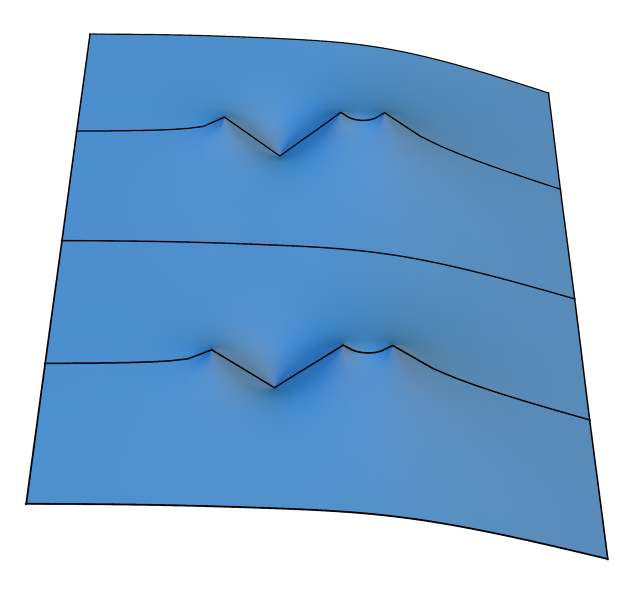}
    \caption*{}
  \end{subfigure}
  \begin{subfigure}[b]{0.3\textwidth}
    \centering
    \includegraphics[width=\textwidth]{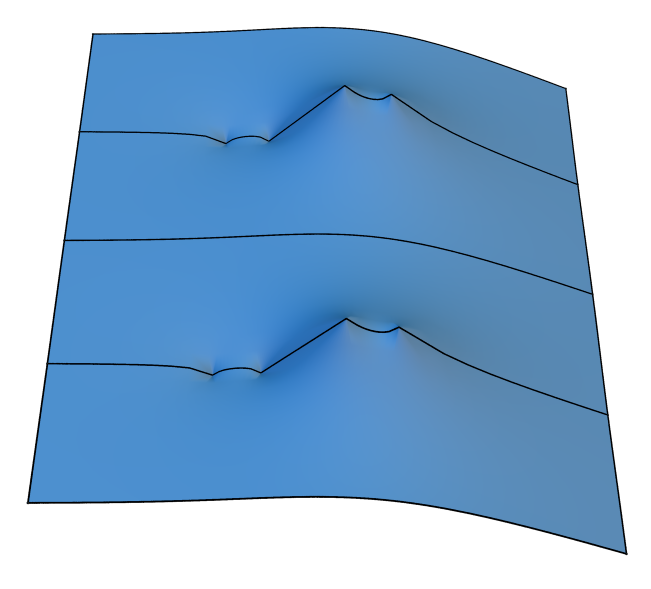}
  \end{subfigure} \hspace{.1in}
  ~ 
  \begin{subfigure}[b]{0.3\textwidth}
    \centering
    \includegraphics[width=\textwidth]{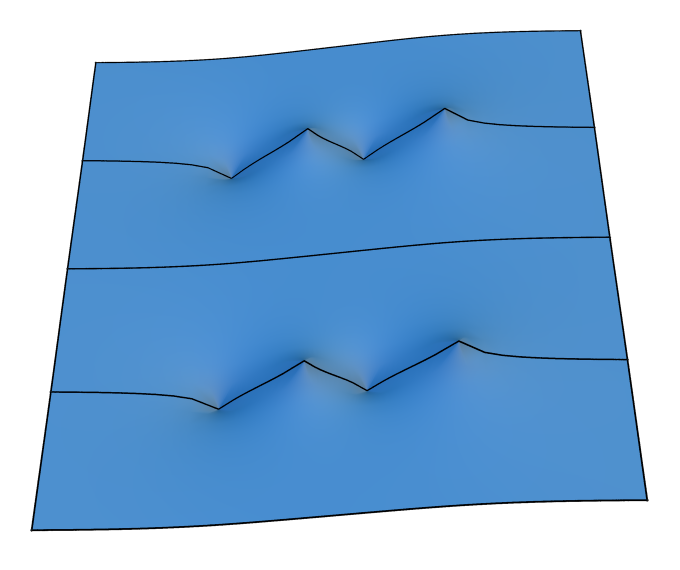}
  \end{subfigure}\hspace{.1in}
  ~
  \begin{subfigure}[b]{0.3\textwidth}
    \centering
    \includegraphics[width=\textwidth]{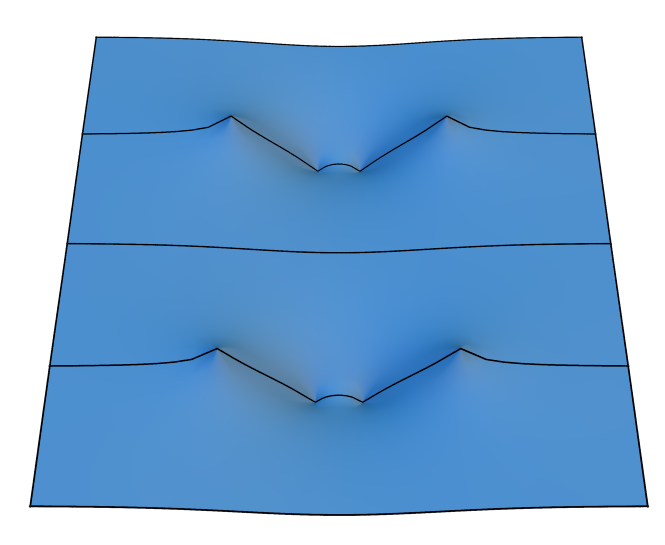}
  \end{subfigure}
  \caption{Singly periodic maximal graphs of type $(4,0)$.}
  \label{fig:(4,0)}
\end{figure}

There are six surfaces of type $(3,1)$, with three cones along the image of the positive real axis and one cone along the image of the negative real axis.  See Figure \ref{fig:(3,1)}.

\begin{figure}[t]
  \centering
  \begin{subfigure}[b]{0.3\textwidth}
    \centering
    \includegraphics[width=\textwidth]{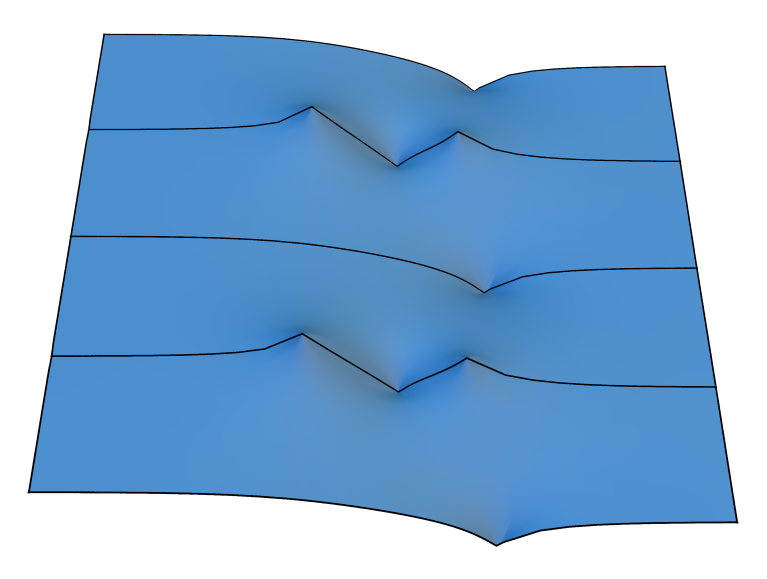}
    \caption*{}
  \end{subfigure} \hspace{.1in}
  ~ 
  \begin{subfigure}[b]{0.3\textwidth}
    \centering
    \includegraphics[width=\textwidth]{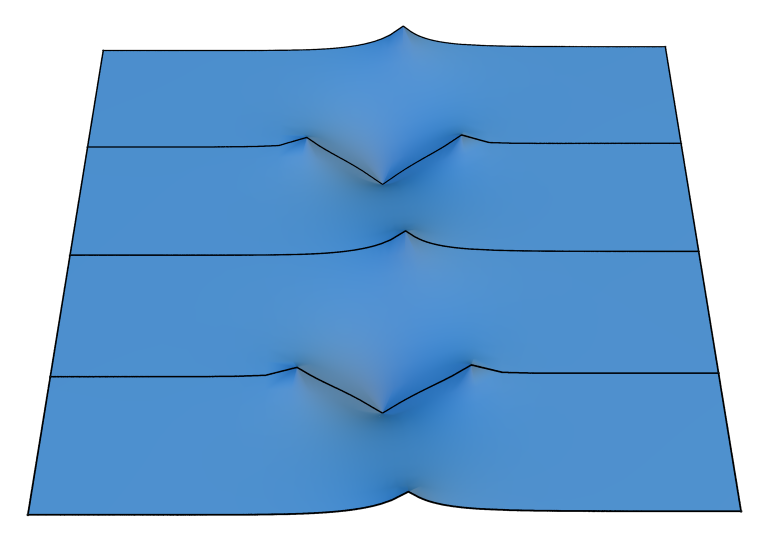}
    \caption*{}
  \end{subfigure} \hspace{.1in}
  ~
  \begin{subfigure}[b]{0.3\textwidth}
    \centering
    \includegraphics[width=\textwidth]{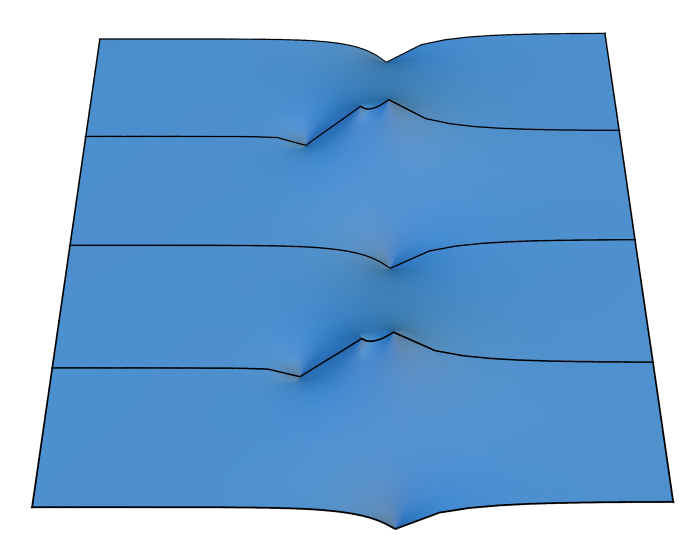}
    \caption*{}
  \end{subfigure}
  \begin{subfigure}[b]{0.3\textwidth}
    \centering
    \includegraphics[width=\textwidth]{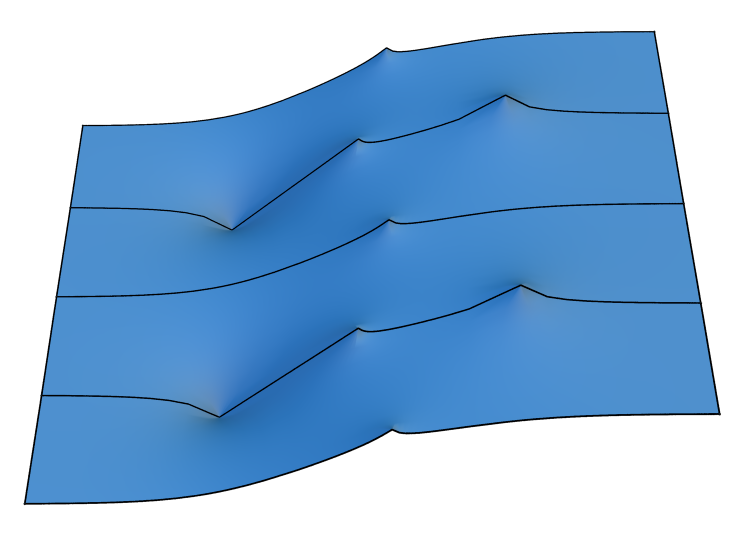}
    \caption*{}
  \end{subfigure} \hspace{.1in}
  ~ 
  \begin{subfigure}[b]{0.3\textwidth}
    \centering
    \includegraphics[width=\textwidth]{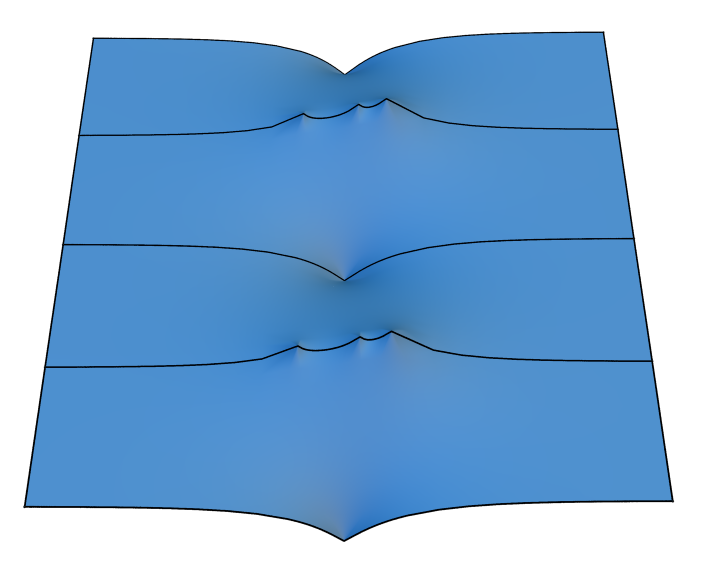}
  \end{subfigure} \hspace{.1in}
  ~
  \begin{subfigure}[b]{0.3\textwidth}
    \centering
    \includegraphics[height=1.3in]{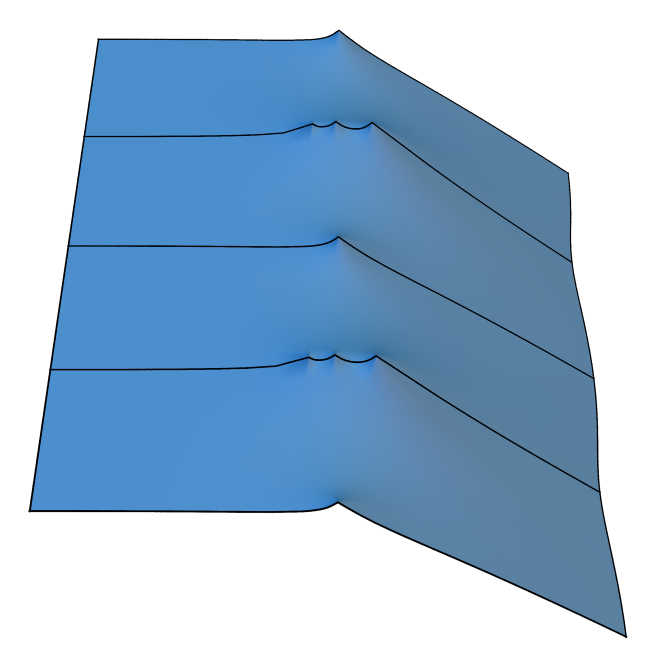}
  \end{subfigure}
  \caption{Singly periodic maximal graphs of type $(3,1)$.}
  \label{fig:(3,1)}
\end{figure}

There are five surfaces of type $(2,2)$, with two cones along the image of the positive real axis and two cones along the image of the negative real axis.  See Figure \ref{fig:(3,1)}.

\begin{figure}[t]
  \centering
  \begin{subfigure}[b]{0.3\textwidth}
    \centering
    \includegraphics[width=\textwidth]{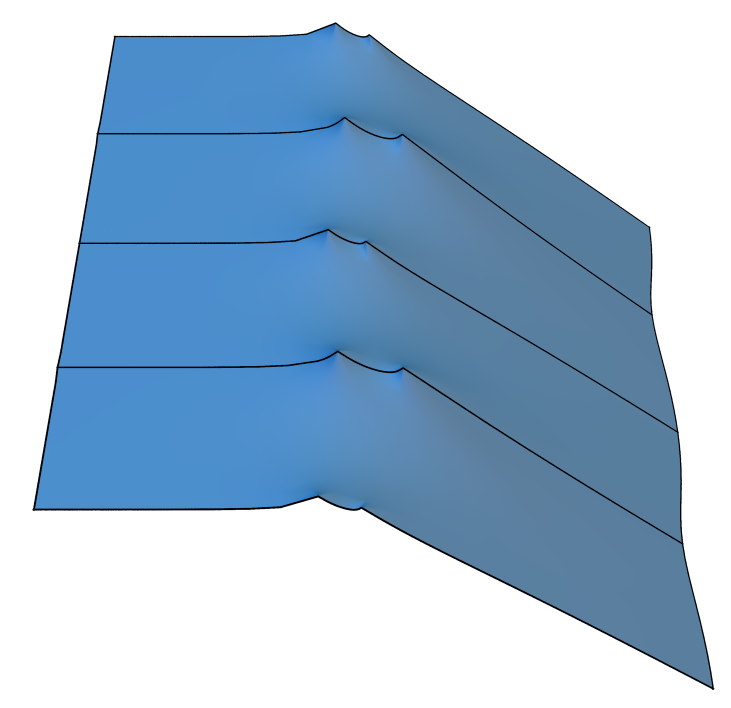}
    \caption*{}
  \end{subfigure} \hspace{.1in}
  ~ 
  \begin{subfigure}[b]{0.3\textwidth}
    \centering
    \includegraphics[width=\textwidth]{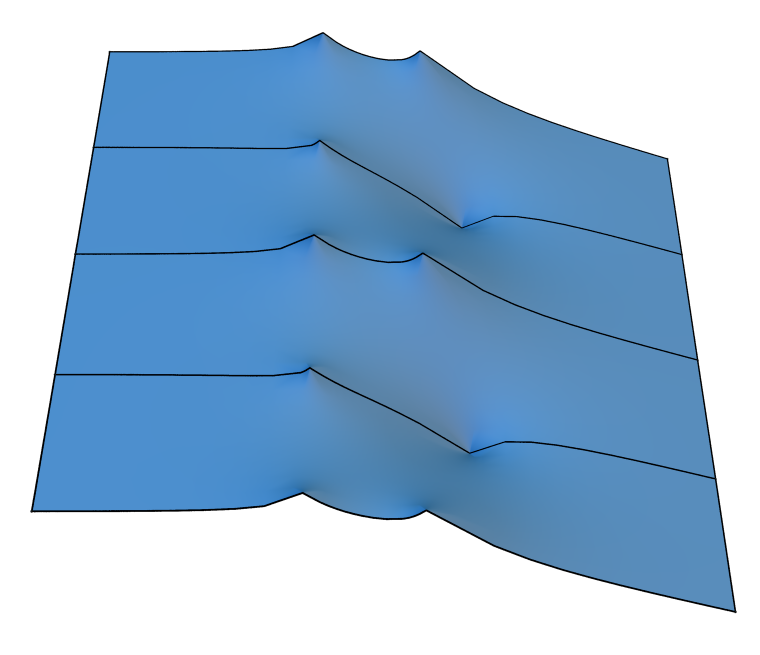}
    \caption*{}
  \end{subfigure} \hspace{.1in}
  ~
  \begin{subfigure}[b]{0.3\textwidth}
    \centering
    \includegraphics[width=\textwidth]{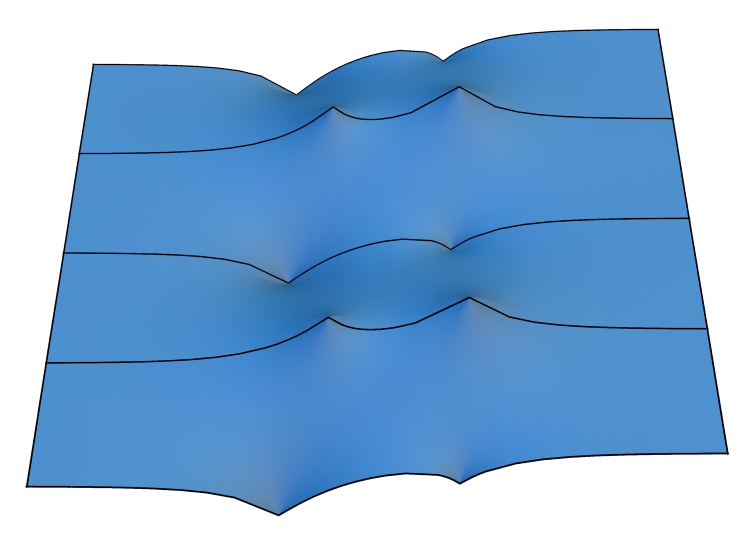}
    \caption*{}
  \end{subfigure} \hspace{.1in}
  \begin{subfigure}[b]{0.3\textwidth}
    \centering
    \includegraphics[width=\textwidth]{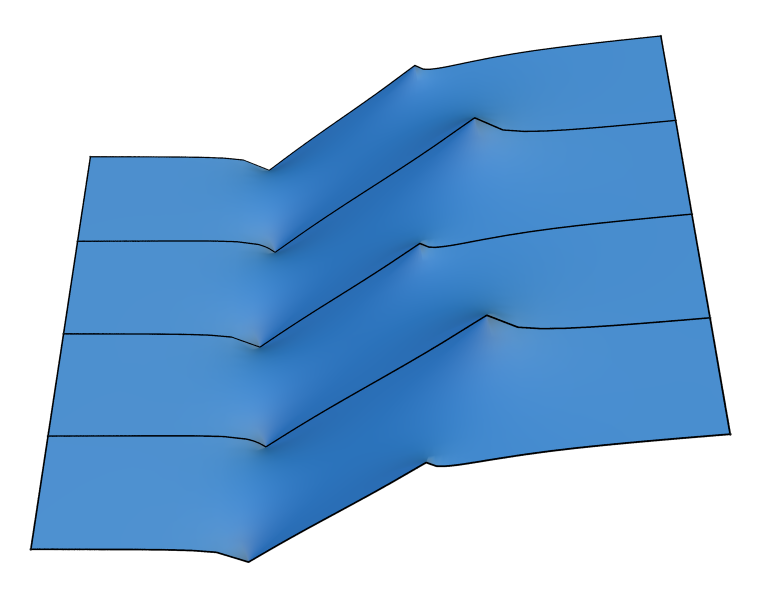}
  \end{subfigure} 
  ~ 
  \begin{subfigure}[b]{0.33\textwidth}
    \centering
    \includegraphics[width=\textwidth]{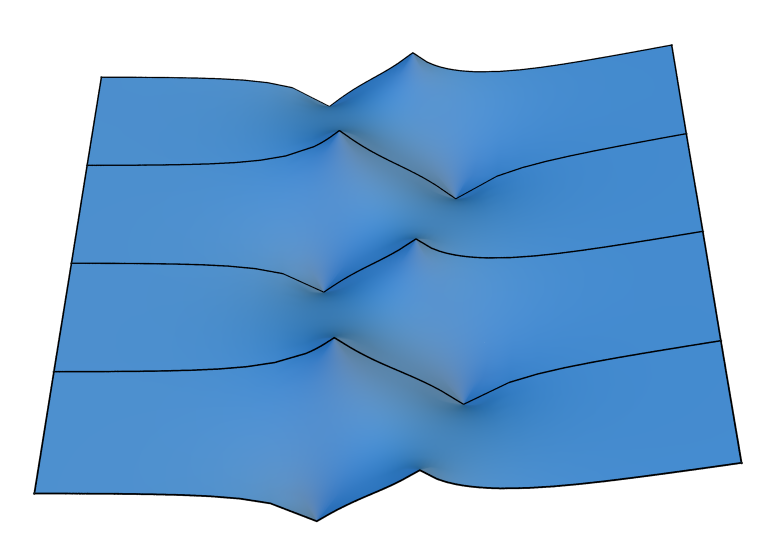}
  \end{subfigure}
  \caption{Singly periodic maximal graphs of type $(2,2)$.}
  \label{fig:(2,0)}
\end{figure}

\subsection{Examples with nine cone-like singularities}
There are five different types of singly periodic maximal graphs with nine cone-like singularities, with each type having up to $2^8$ variations on the directions of the cones.  The minimal surfaces in Figure \ref{fg:CW} correspond to singly periodic maximal graphs with nine cone-like singularities of types $(8,1)$ and $(7,2)$.  See Figure \ref{fig:9cones}

\begin{figure}[t]
  \centering
  \begin{subfigure}[b]{0.47\textwidth}
    \centering
    \includegraphics[height=1in]{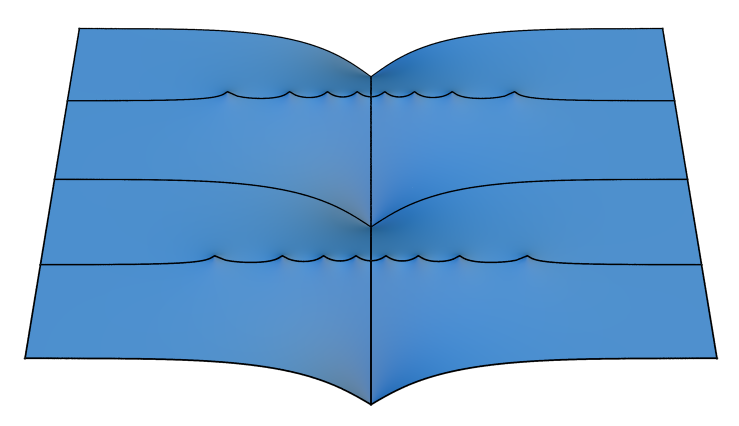}
    \caption*{Type $(8,1)$}
  \end{subfigure} \hspace{.1in}
  ~ 
  \begin{subfigure}[b]{0.47\textwidth}
    \centering
    \includegraphics[height=1in]{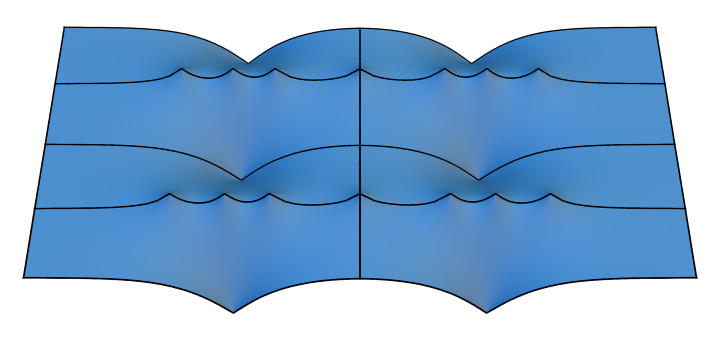}
    \caption*{Type $(7,2)$}
  \end{subfigure} \hspace{.1in}
  \caption{Singly periodic maximal graphs with nine cone-like singularities.}
  \label{fig:9cones}
\end{figure}

\newpage


\end{nouppercase}
\end{document}